\documentclass[11pt]{amsart}

\usepackage{amsmath,amssymb,amsthm}
\usepackage{xcolor}
\usepackage{tikz}
\usetikzlibrary{arrows.meta,positioning}
\usepackage[colorlinks,linkcolor=blue,citecolor=blue]{hyperref}

\theoremstyle{plain}
\newtheorem{theorem}{Theorem}[section]
\newtheorem{proposition}[theorem]{Proposition}
\newtheorem{lemma}[theorem]{Lemma}
\newtheorem{corollary}[theorem]{Corollary}
\newtheorem*{thmA}{Theorem A}
\newtheorem*{thmB}{Theorem B}
\newtheorem*{thmC}{Theorem C}
\theoremstyle{definition}
\newtheorem{definition}[theorem]{Definition}

\theoremstyle{remark}
\newtheorem{remark}[theorem]{Remark}

\newcommand{\FF}{\mathcal{F}}
\newcommand{\GG}{\mathcal{G}}
\newcommand{\Ind}{\operatorname{Ind}}
\newcommand{\CS}{\operatorname{CS}}
\newcommand{\Sing}{\operatorname{Sing}}
\newcommand{\TTang}{\operatorname{Tang}}   
\newcommand{\tang}{\operatorname{tang}}    
\newcommand{\Res}{\operatorname{Res}}
\newcommand{\ord}{\operatorname{ord}}
\newcommand{\PP}{\mathbb{P}}
\newcommand{\CC}{\mathbb{C}}

\begin{document}

\title[Geometric index theorems for holomorphic foliations]{Geometric index theorems for holomorphic foliations and curves}

\author{C\'esar Camacho}
\address{School of applied mathematics  EMAp-FGV,  Rio de Janeiro,  Brazil}
\email{cesar.camacho@fgv.br}

\author{Rudy Rosas}
\address{Pontificia Universidad Cat\'olica del Per\'u, Lima, Peru}
\email{rudy.rosas@pucp.edu.pe}

\subjclass[2020]{32S65, 37F75, 53A20, 14H50}
\keywords{Holomorphic foliations, index theorems, projective connections,
Camacho--Sad index, tangencies, Pl\"ucker formula}

\begin{abstract}
Let $M$ be a complex surface endowed with a holomorphic projective
connection and let $C\subset M$ be a compact smooth holomorphic curve.
Given two singular holomorphic foliations $\FF$ and $\GG$ near $C$, with
$C$ not $\GG$-invariant, we attach to every point $p\in C$ an index
$\Ind(\FF,\GG,C,p)\in\CC$ of $\FF$ relative to the reference $\GG$, and we
prove that these indices add up to $T\GG\cdot C$. When $C$ is
$\FF$-invariant the index is the Camacho--Sad index corrected by the order
of tangency of $\GG$ with $C$, and the index formula reduces to the
Camacho--Sad formula.

The main applications are two contact formulas. At a tangency point $p$ of
$\GG$ with $C$, let $k(\GG,C,p)$ be the ratio of the curvatures at $p$ of
the leaf of $\GG$ and of the curve; it is a projective invariant, although
each curvature separately depends on a choice of metric. If $\GG$ has no
singular points on $C$ and only simple tangencies with $C$, then
\[
\sum_{p}\frac{1}{1-k(\GG,C,p)}\;=\;\frac23\,\bigl(C\cdot C+g-1\bigr),
\]
where $g$ is the genus of $C$: the number of tangencies depends on $\GG$,
but this weighted count does not. For a pencil of lines in the projective
plane it is the Pl\"ucker formula for the class of a plane curve. The
second formula asserts that, for a curve in general position with respect
to $\FF$ and $\GG$ which is not a geodesic, the total index of the
tangencies of $\FF$ with $\GG$ along $C$ equals the number of tangencies
of $\GG$ with $C$ minus one third of the number of inflection points of
$C$; in particular, it does not depend on $\FF$.
\end{abstract}

\maketitle
\section{Introduction}

Let $C$ be a compact smooth holomorphic curve in a complex surface $M$ and
let $\FF$ be a singular holomorphic foliation defined near $C$, not
necessarily leaving $C$ invariant. Let $\GG$ be a second singular
holomorphic foliation, defined near $C$ and not leaving $C$ invariant,
which plays the role of a \emph{reference}: the foliation $\FF$ is measured
against it along $C$.
Under the assumption that $M$ carries a holomorphic projective connection
--- the projective plane being the basic example --- we attach to every
point $p\in C$ an index
\[
\Ind(\FF,\GG,C,p)\in\CC ,
\]
which vanishes at all but finitely many points, and we prove
(Theorem~A) that
\[
\sum_{p\in C}\Ind(\FF,\GG,C,p)\;=\;T\GG\cdot C .
\]

When $C$ happens to be $\FF$-invariant, the index is the Camacho--Sad index
$\CS(\FF,C,p)$ of \cite{CS}, corrected by the order of tangency of the
reference with the curve,
\[
\Ind(\FF,\GG,C,p)\;=\;\CS(\FF,C,p)\;-\;\TTang(\GG,C,p)
\]
(Proposition~\ref{prop:invariant}); in particular the index does not
depend on the projective connection in this case. Since the tangency orders
of $\GG$ with $C$ add up to $C\cdot C-T\GG\cdot C$, Theorem~A becomes in
this case the Camacho--Sad formula
\[
\sum_{p}\CS(\FF,C,p)\;=\;C\cdot C ,
\]
and is thus a generalization of it to surfaces endowed with a projective
connection.

The index $\CS(\FF,C,p)$ has been generalized in other directions, to
non-invariant curves and to higher dimensions \cite{Ca,CMS,CL,ABT}, and
it is worth explaining where our theorem stands in relation to these
results. In those works the index requires, as an additional reference, a
transverse structure defined along the whole curve and the
existence of such a structure depends on how $C$ is embedded in the
ambient space. In $\PP^2$, the archetypal surface with a projective
structure, a compact smooth curve admits such a structure only when it is a line.
Our index, by contrast, takes as reference an
arbitrary second foliation $\GG$, not everywhere tangent to $C$, and
uses the projective structure to define in a natural way the index
$\Ind(\FF,\GG,C,p)$, attached to the two foliations and to any smooth
curve. We emphasize that this index is a \emph{projective} invariant, not
an analytic one: it is preserved by biholomorphisms which respect the
projective connections. What the index measures lives in the projective
geometry of $M$.

\subsection{The setting}
The role of the projective connection is the following. Along a curve
which is not invariant, the index has to be extracted from the derivative
of the slope of $\FF$, relative to $\GG$, in the direction transverse to
$C$; and this derivative is sensitive to the second-order jet of the
transverse parameter used to compute it. Since we want the index to be
defined for \emph{every} curve in the surface, \emph{every} foliation and
\emph{every} reference at once, the normalization of transverse parameters
cannot be chosen curve by curve: it has to come from the ambient surface.
This uniformity is a strong demand, and it constrains the surface. A
holomorphic projective connection supplies exactly what is needed: its
geodesics provide, canonically and simultaneously for all curves and all
references, transverse parameters normalized to second order. It is the
natural ambient structure for this index, and it is the setting of the
present paper.

A projective connection on $M$ is an atlas of holomorphic torsion-free
connections $\{(U_i,\nabla_i)\}$ whose members are projectively equivalent
--- in Weyl's classical sense
\cite{We} --- on overlaps (Section~\ref{sec:preliminaries}). The
\emph{geodesics} of the projective connection, the holomorphic curves
satisfying $\nabla_{\dot c}\dot c\wedge\dot c\equiv0$, are then globally
well defined: through every point and every direction passes exactly one.
The basic example is the \emph{flat} case, the holomorphic projective
structures of Ehresmann--Thurston: an atlas of charts with values in
$\PP^2$, with transitions in $\mathrm{PGL}(3,\CC)$, whose distinguished
charts carry the pullbacks of the affine connection of
$\CC^2\subset\PP^2$ and whose geodesics are the preimages of the projective
lines. The projective plane carries its tautological flat projective
connection, so all our results apply to foliations of $\PP^2$, where they
are already new; but the theorems are proved for an arbitrary projective
connection.

We emphasize two features of the setting. First, the surface $M$ is
\emph{not} assumed to be compact: a neighborhood of the curve $C$ is enough.
This matters, because compact complex surfaces carrying a flat projective
connection are completely classified \cite{KO} and form a short list. Second, the reference
$\GG$ is an arbitrary singular foliation: it is \emph{not} required to be
transverse to $C$, and isolated tangencies of $\GG$ with $C$, and even
singularities of $\GG$ on $C$, are allowed. Far from being a nuisance, the
tangencies of the reference with the curve are the source of the
applications: each of them carries a contact invariant, defined by the
projective connection, which computes the index there whenever $\FF$ is
transversal to $C$ at the point; and the freedom in the choice of the
reference makes the reference itself an object of study. The curve $C$ may
be invariant by $\FF$,
and $\FF$ may have singularities on $C$. Only two configurations are
excluded: $C$ invariant by $\GG$, and $\FF$ tangent to $\GG$ along the whole
of $C$.

\subsection{The main theorem}
For a foliation $\GG$ we denote by $T\GG$ its tangent bundle, defined by
local generators with isolated zeros; for a line bundle $E$ on a
neighborhood of $C$ we write $E\cdot C:=\deg\,(E|_C)$.

\begin{thmA}
Let $M$ be a complex surface endowed with a projective connection and let
$C\subset M$ be a compact smooth holomorphic curve. Let $\GG$ be a singular
holomorphic foliation defined in a neighborhood of $C$, such that $C$ is not
$\GG$-invariant, and let $\FF$ be a singular holomorphic foliation defined in
a neighborhood of $C\setminus\Sigma$, where $\Sigma\subset C$ is finite, such
that $\TTang(\FF,\GG)\cap C$ is a finite set. Then
$\Ind(\FF,\GG,C,p)$ vanishes for all but finitely many $p\in C$, and
\[
\sum_{p\in C}\Ind(\FF,\GG,C,p)\;=\;T\GG\cdot C .
\]
\end{thmA}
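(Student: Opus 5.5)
We do not yet have the definition of the index, so the plan assumes the natural construction suggested by the setting. At each $p\in C$ we take geodesics of the projective connection transverse to $C$ as transverse parameters. In such a parameter we measure the slope of $\FF$ relative to $\GG$, differentiate it in the transverse direction, and obtain a meromorphic $1$-form (or section) $\eta$ along $C$. The index $\Ind(\FF,\GG,C,p)$ is then the residue of $\eta$ at $p$.

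The strategy is to show that these local forms glue into a global meromorphic object on $C$. Its residues are the indices, and the residue theorem gives the total as a degree computation. Concretely, $\eta$ should be a meromorphic connection on a line bundle along $C$, and the sum of its residues should equal $-\deg$ of that bundle. We must then identify the bundle, up to sign, with $T\GG|_C$, or more precisely with $N_C\otimes(\text{tangency divisor of }\GG)^{-1}$. Its degree is $C\cdot C-\TTang(\GG,C)=T\GG\cdot C$, which is consistent with Proposition~\ref{prop:invariant}.

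The steps, in order, are as follows.

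\textbf{Step 1: local coordinates.} Near each point of $C$ we choose coordinates $(x,y)$ with $C=\{y=0\}$. The lines $x=\text{const}$ are geodesics of a local representative connection $\nabla_i$, and $y$ is an affine parameter along them. Such a "geodesic transverse coordinate" exists, and the projective connection fixes $y$ up to $y\mapsto a(x)y+b(x)y^2+O(y^3)$ with $b$ determined projectively; this is exactly the second-order normalization.

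\textbf{Step 2: the local form.} We write $\FF$ and $\GG$ by $1$-forms $\omega_\FF$ and $\omega_\GG$. At a generic point of $C$ the relative slope is $\omega_\FF=h(x,y)\,\omega_\GG+(\ldots)\,dy$. We define the local form as $\partial_y$ of the slope at $y=0$ times $dx$, suitably normalized, or equivalently as the restriction of $\frac{d\omega}{\omega}$-type expressions. This gives a meromorphic $1$-form on $C$ minus finite sets. Its poles lie only at points of $\TTang(\FF,\GG)\cap C$, at $\Sigma$, at singular points, and at tangencies of $\GG$ with $C$, all of which are finite by hypothesis. So the index vanishes off a finite set.

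\textbf{Step 3: gluing.} We check that under the change of normalized coordinate $y\mapsto a(x)y+b(x)y^2$ and under a change of local connection in the projective class, the local forms differ by $d\log$ of a cocycle $g_{ij}$ defining a fixed line bundle $L$. Then $\{\eta_i\}$ is a meromorphic connection on $L$, and $\sum_p \Res_p\eta=-\deg L$.

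\textbf{Step 4: identify the bundle.} We compute the cocycle from the transformation rules of the tangent vector field of $\GG$ along $C$ and of the normal parameter $y$. The aim is to show $L^{-1}\cong T\GG|_C$, giving $\sum_p\Ind=T\GG\cdot C$.

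The main obstacle is Step 3. The $y^2$ term in the coordinate change produces an extra term proportional to $b(x)$ in the transformed form, and we must show it cancels. It cancels precisely because geodesic parameters have compatible second jets, since Weyl projective equivalence changes the connection by $\alpha\otimes X+X\otimes\alpha$. We would verify this first with an explicit computation in the flat case of $\PP^2$ and then promote it using the transformation law of the connection. We would also need to check that tangency points of $\GG$ with $C$ contribute correctly, namely as extra poles whose residues account for the divisor correction. As a sanity check, we would compare the $\FF$-invariant case against the Camacho--Sad formula and Proposition~\ref{prop:invariant}.
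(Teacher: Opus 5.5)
Your global architecture (local residue forms along $C$, a $d\log$ cocycle, the residue theorem, identification of the bundle) is the one the paper uses for Theorem~\ref{thm:main}. The gap is in Steps~1--3, where all the content lies. You never fix the second-order normalization of the transverse parameter, and the one you do write down, the affine parameter of a local representative $\nabla_i$, does not glue. Suppose $\tilde y=a(x)y+b(x)y^2+\cdots$ along the same transverse family, and $m=dy/dx$ along the leaves of $\FF$. Then
\[
\partial_{\tilde y}\tilde m\big|_{\tilde y=0}\,dx
=\partial_y m\big|_{y=0}\,dx+d\log a+\frac{2b}{a}\,m(x,0)\,dx .
\]
The last term is not a logarithmic derivative, and it vanishes only where $C$ is $\FF$-invariant. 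Now take $\nabla_j=\nabla_i+\xi\otimes\mathrm{id}+\mathrm{id}\otimes\xi$. The two affine parameters of the same geodesic with the same initial velocity $Z$ differ with $a=1$, $b=\xi(Z)$, so your local forms disagree on overlaps by $2\,\xi(Z)\,m(x,0)\,dx$. Weyl equivalence is therefore the source of the discrepancy, not the reason it cancels. The geodesics plus the first-order normalization leave the second jet of the parameter free: one function $c$ along $C$, acting by $t\mapsto t/(1-ct)$. The claim that ``$b$ is determined projectively'' has no content until that function is fixed canonically.

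The missing idea is the extension lemma (Lemma~\ref{lem:extension}). First, the transverse geodesics must be the ones with initial velocity $Z$, a local generator of $T\GG|_C$. This is the only way $\GG$ enters: there is no relative slope of $\omega_\FF$ against $\omega_\GG$ off $C$, and the index depends on $\GG$ only through $(T\GG\to TM)|_C$. Second, the parameter is $u=t/(1-h\,t)$ with $h=(Z\wedge\nabla_{\gamma'}Z)/(Z\wedge\gamma')$, i.e.\ $\nabla_{\tilde Z}\tilde Z|_C=2hZ$. Under a change of representative $h$ becomes $h+\xi(Z)$, so this correction absorbs exactly the term above. Then $\eta_Z$ is independent of all choices (Proposition~\ref{prop:welldef}, via \eqref{eq:affine}).

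Since $h_{\theta Z}=\theta h_Z$, replacing $Z$ by $\theta Z$ rescales the adapted coordinate purely linearly, $t=\theta\hat t$. This gives the rescaling law $\eta_{\theta Z}=\eta_Z-d\log\theta$ (Proposition~\ref{prop:rescaling}). The cocycle is then literally the transition cocycle of $T\GG|_C$, and your Step~4 is automatic.

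Finally, once the geodesics are tied to $\GG$, your Step~1 coordinates do not exist at the tangencies of $\GG$ with $C$. The paper needs none there, because $\eta_Z$ is only used on a punctured disc. The residue theorem applied to $\chi=\eta_i-d\log f_i$, where $\zeta=f_ie_i$ is a meromorphic section of $T\GG|_C$, accounts for these points with no separate divisor correction.
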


The index can be nonzero only at three kinds of points: the tangencies of
$\GG$ with $C$ (including the singular points of $\GG$ on $C$), the points of
$\Sigma$, and the tangencies of $\FF$ with $\GG$ along $C$ (including the
singular points of $\FF$ on $C$).

A feature of the statement deserves emphasis, since it is used in an
essential way in the proof of the contact formula below: the foliation $\FF$
is only required to be defined in a neighborhood of $C\setminus\Sigma$, and
\emph{no extension of $\FF$ across the points of $\Sigma$ is required}. 

In fact the index depends on the auxiliary foliation $\GG$ only through the
restriction $T\GG|_C$ --- more precisely, through the morphism
$(T\GG\to TM)|_C$. This leads to the natural generality in which the theorem
will be proved: the auxiliary datum is a morphism
\[
\psi\colon L\longrightarrow TM|_C
\]
from a line bundle $L$ on $C$, not everywhere tangent to $C$, and the index
formula reads
\[
\sum_{p\in C}\Ind(\FF,\psi,C,p)\;=\;c_1(L).
\]
An important particular case is that of a line \emph{subbundle}
$V\subset TM|_C$, for which we write $\Ind(\FF,V,C,p)$.

\subsection{Two contact formulas}
The applications of Theorem~A given in this paper are two global formulas
for the contact invariants attached to tangency points.

We first recall a classical count. Let $C\subset M$ be a compact smooth
curve. The projective second fundamental form of $C$ (Section
\ref{sec:preliminaries}) is a canonical section $\sigma$ of
$N_C\otimes (T^*C)^{\otimes2}$; its zeros are the \emph{inflection points} of
$C$, the points where the tangent geodesic has contact of order $\ge 3$ with
$C$. If $C$ is not a geodesic then $\sigma\not\equiv0$ and the number of
inflection points, counted with multiplicity, is
\[
\iota(C)\;=\;C\cdot C+4\,(g-1),
\]
where $g$ is the genus of $C$. For a smooth plane curve of degree $c$ this is
the classical count $3c(c-2)$.

At a point $p$ where two smooth holomorphic curves $C_1,C_2$ are tangent we
attach a \emph{contact invariant} $k(C_1,C_2,p)\in\PP^1$: since the two
curves have the same tangent line at $p$,
their projective second fundamental forms at $p$ are two elements of the same complex line $N_p\otimes(T^*_pC)^{\otimes2}$, and we set
\[
k(C_1,C_2,p)\;:=\;\frac{\sigma_{C_1}(p)}{\sigma_{C_2}(p)}\;\in\;\PP^1 .
\]
The invariant $k(C_1,C_2,p)$ is a genuine \emph{ratio of curvatures}: If there exists a smooth local hermitian metric compatible with the projective connection, the corresponding
second fundamental forms at $p$ are two vectors on the same one-dimensional linear space, and $k(C_1,C_2,p)$ is their quotient (Remark~\ref{rem:kvalues}).
Individually, the two curvatures depend on the metric; their quotient does
not.

In local holomorphic coordinates in which $p=(0,0)$ and  $\{y=0\}$ is a geodesic 
 and is the common tangent, writing $C_i$ as the
graph of $y=f_i(x)$ one has $k(C_1,C_2,p)=f_1''(0)/f_2''(0)$
(Remark~\ref{rem:kvalues}(d)). The tangency of $C_1$ and $C_2$
at $p$ is \emph{simple} --- of contact order exactly two --- precisely when
$k(C_1,C_2,p)\neq1$; the value $k(C_1,C_2,p)=\infty$, which occurs when $p$
is an inflection point of $C_2$, is allowed, with the convention
$\tfrac1{1-\infty}=0$.

If $\GG$ is a foliation tangent to $C$ at $p$, and $\mathcal L$ denotes the
leaf of $\GG$ through $p$, we write
\[
k(\GG,C,p)\;:=\;k(\mathcal L,C,p) .
\]

\begin{thmB}[A contact formula for foliations and curves]
Let $M$ be a complex surface endowed with a projective connection, let
$C\subset M$ be a compact smooth holomorphic curve of genus $g$, and let
$\GG$ be a singular holomorphic foliation defined on a neighborhood of $C$,
with no singular point on $C$, not leaving $C$ invariant, and having only
simple tangencies with $C$, at the points $p_1,\dots,p_n$. Then
\[
\sum_{j=1}^{n}\frac{1}{1-k(\GG,C,p_j)}\;=\;\frac23\,\bigl(C\cdot C+g-1\bigr).
\]
\end{thmB}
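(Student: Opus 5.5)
The plan is to apply Theorem~A once, with a well-chosen auxiliary foliation $\FF$ that is defined only off a finite set $\Sigma\subset C$. The reference $\GG$ is the one in the statement. The proof then reduces to computing local indices at three kinds of points: the tangency points $p_j$, the points of $\Sigma$, and the tangencies of $\FF$ with $\GG$ along $C$. The choice of $\FF$ must meet two requirements. First, $\FF$ should be tangent to $\GG$ along $C$ only where forced, so that the only contributions are at the $p_j$ and at $\Sigma$. Second, the local indices must be universal expressions in the contact data.

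The natural candidate comes from the geometry of $C$ itself. Let $\Sigma$ be the set of inflection points of $C$, which is finite since $C$ cannot be a geodesic when $\GG$ has simple tangencies with it. Near $C\setminus\Sigma$, take $\FF$ to be the foliation obtained by extending, via the projective connection, the field of tangent geodesics of $C$. Concretely, the leaf of $\FF$ through a point near $q\in C\setminus\Sigma$ should be a curve through $q$ tangent to $C$ at $q$ whose projective second fundamental form is a fixed multiple $\lambda\,\sigma_C(q)$, with $\lambda\ne1$; $\lambda=0$ gives the tangent geodesics. This is a local holomorphic choice, canonically determined by the projective connection. It degenerates exactly where $\sigma_C=0$, which is why $\Sigma$ has to be removed. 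This is precisely the feature of Theorem~A that no extension of $\FF$ across $\Sigma$ is required.

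With this $\FF$ the steps are as follows.

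\emph{Step 1.} Away from $\Sigma$ and from the $p_j$, the foliation $\FF$ is tangent to $C$ while $\GG$ is transverse to $C$. Hence $\FF$ and $\GG$ are transverse there, and the index vanishes.

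\emph{Step 2.} At a simple tangency $p_j$, both $\FF$ and $\GG$ are tangent to $C$. Working in the normalized coordinates of Remark~\ref{rem:kvalues}(d), where $y=0$ is the tangent geodesic, write the leaves as graphs $y=f(x)$. The slope of $\FF$ relative to $\GG$ along the normalized transverse parameter then has a simple pole or zero governed by the ratio of the second derivatives. Expanding the defining formula of $\Ind$, I expect to obtain
\[
\Ind(\FF,\GG,C,p_j)=\alpha\,\frac{1}{1-k(\GG,C,p_j)}+\beta ,
\]
with universal constants $\alpha,\beta$ depending only on $\lambda$. The essential input is that the contact is simple, i.e.\ $k\ne1$.

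\emph{Step 3.} At an inflection point $q\in\Sigma$ of multiplicity $m$, do a direct local computation in a normal form $y=x^{m+2}+\dots$. This should give $\Ind(\FF,\GG,C,q)=\gamma\, m$ for a universal $\gamma$. This is where a fractional coefficient such as $1/3$ naturally appears, coming from the cube-root type behaviour of the family of tangent geodesics near an inflection.

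\emph{Step 4.} Insert the local values into Theorem~A:
\[
\sum_j\Bigl(\alpha\,\tfrac1{1-k_j}+\beta\Bigr)+\gamma\,\iota(C)=T\GG\cdot C .
\]
Then use $n=\sum_j\TTang(\GG,C,p_j)=C\cdot C-T\GG\cdot C$, valid because all tangencies are simple and $\GG$ has no singular points on $C$, together with $\iota(C)=C\cdot C+4(g-1)$. The terms in $T\GG\cdot C$ must cancel once $\beta$ is correctly computed. This cancellation is a consistency check on Steps 2--3, and the result should be $\tfrac23(C\cdot C+g-1)$.

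The main obstacle is the pair of local computations in Steps 2 and 3, especially Step 3 at the points of $\Sigma$, where $\FF$ is genuinely singular and not extendable. There I would first take $\lambda=0$ and compute in the flat model, a plane curve $y=x^3$ against a pencil, since the index is projectively invariant and local. I would then check the answer against the Pl\"ucker formula for a pencil of lines in $\PP^2$. For a pencil all $k_j=0$, and the formula must give $n=\tfrac23(C\cdot C+g-1)$, which for a smooth curve of degree $c$ equals $c(c-1)$. This fixes any constants left undetermined by the general computation.
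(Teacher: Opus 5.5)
\textbf{There is a genuine gap: the auxiliary foliation $\FF$ does not exist.}

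Suppose a family of curves $\Gamma_q$, $q\in C\setminus\Sigma$, with $\Gamma_q$ tangent to $C$ at $q$, were the leaves of a foliation near $C\setminus\Sigma$. Then the leaf through $q$ would be $\Gamma_q$, so $T\FF|_C=TC$. But then $C$ is an integral curve of $\FF$, hence $\FF$-invariant, and the leaf through $q$ is $C$ itself. This contradicts $\sigma_{\Gamma_q}(q)=\lambda\,\sigma_C(q)\neq\sigma_C(q)$.

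What actually happens is that $C$ is the \emph{envelope} of your family. Already for tangent geodesics in the flat model $C=\{y=x^2/2\}$, the tangent line at $(a,a^2/2)$ is $y-x^2/2=-(x-a)^2/2$. So through each point $(x,y)$ near $C$ and off $C$ pass two of them, $a=x\pm\sqrt{x^2-2y}$, and they merge along $C$. This is a $2$-web folded along $C$, not a foliation. In the paper's terms, the map $(s,u)\mapsto c_s(u)$ with $\dot c_s(0)=\gamma'(s)$ has rank one along all of $C$. That is why Lemma~\ref{lem:geodfol} only builds geodesic foliations from line fields that are transverse to $C$ off a finite set.

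Nor can Step~2 be rescued by some honest foliation with $T\FF|_C=TC$. Such an $\FF$ leaves $C$ invariant, so Proposition~\ref{prop:invariant} gives $\Ind(\FF,\GG,C,p_j)=\CS(\FF,C,p_j)-1$. This equals $-1$ wherever $\FF$ is regular at $p_j$ and never involves the projective connection, whereas $k(\GG,C,p_j)$ does. Thus $\alpha=0$, and Theorem~A collapses to the Camacho--Sad formula, with no information on $\sum_j 1/(1-k_j)$.

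Two further errors:
\begin{enumerate}
\item[(i)] $C$ can be a geodesic while $\GG$ has simple tangencies with it, e.g.\ a generic line and a generic foliation of degree $d\ge1$ in $\PP^2$. Then $\Sigma=C$ is not finite; in that case all $k_j=\infty$ and both sides of the formula vanish.
\item[(ii)] Fixing $\alpha,\beta,\gamma$ by the Pl\"ucker check is not a substitute for actually computing them.
\end{enumerate}

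\textbf{The missing idea.} Use the geodesic foliation $\FF_V$ generated by $V=T\GG|_C$ itself. This is a genuine foliation off $T_V$, because $V$ is transverse to $C$ there. Measure it against an auxiliary subbundle $W$ whose tangencies with $C$ are simple and lie off $T_V$, with $V\wedge W\not\equiv0$. Then every local index is computable:
\begin{enumerate}
\item[(a)] at the zeros of $V\wedge W$ it is the tangency order, by Corollary~\ref{cor:case3}, since $\nu\equiv0$;
\item[(b)] on $T_V$ it is $k/(1-k)$, by Proposition~\ref{prop:case2};
\item[(c)] on $T_W$ it is $2k/(1-k)$, by Proposition~\ref{prop:case1}.
\end{enumerate}
Writing $\Pi_V,\Pi_W$ for the weighted sums $\sum 1/(1-k)$, Theorem~A together with the tangency lemma (Lemma~\ref{lem:tangencies}) gives $\Pi_V+2\Pi_W=2\,C\cdot C+2g-2$. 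Exchanging $V$ and $W$ gives $2\Pi_V+\Pi_W=2\,C\cdot C+2g-2$, whence $\Pi_V=\tfrac23(C\cdot C+g-1)$.

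A single application of Theorem~A with $\GG$ as reference cannot suffice. With $\FF$ transversal at the $p_j$, the remaining unknown is the total index at $\TTang(\FF,\GG)\cap C$. That is exactly the quantity Theorem~C computes, and it does so \emph{from} Theorem~B.
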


The right hand side does not depend on $\GG$: this is the content of the
formula. The \emph{number} $n$ of tangencies does depend on $\GG$ --- it
equals $C\cdot C-T\GG\cdot C$ --- but the weighted sum does not.

Theorem~B is an extension of the Pl\"ucker formula. Suppose $M=\PP^2$ and
take $C$ smooth of degree $c$ and $\GG$ of degree $d$; then $n=cd+c(c-1)$
and the sum equals $c(c-1)$, the \emph{class} of $C$. If $\GG$ is a pencil
of lines through a generic point, every leaf is a line, every invariant $k$
vanishes, every weight equals $1$, and the formula reduces to the classical
count of the tangent lines to $C$ from a point (see e.g.\ \cite{GHP}).
Replacing the pencil by an arbitrary foliation changes the number of
tangency points, but the total count remains constant, provided each
tangency is reweighted by the factor $1/(1-k)$.

In Section~\ref{sec:contact1} we prove a more general statement, for line
subbundles of $TM|_C$ instead of foliations.

The second application concerns the tangencies of two foliations along a
curve. We say that a compact smooth curve $C$ is in \emph{general position}
with respect to the pair $(\FF,\GG)$ if $\GG$ has no singular point on $C$
and $C$ is not $\GG$-invariant, if all tangencies of $\GG$ with $C$ are
simple and $\FF$ is transversal to $C$ at each of them, and if $\FF$ and
$\GG$ are not tangent along the whole of $C$
(Section~\ref{sec:contact2}).

\begin{thmC}[A contact formula for two foliations along a generic curve]
Let $M$ be a complex surface endowed with a projective connection, let
$\FF,\GG$ be singular holomorphic foliations on a neighborhood of a compact
smooth curve $C$, and assume $C$ is in general position with respect to
$(\FF,\GG)$. Then
\[
\sum_{p\,\in\,\TTang(\FF,\GG)\cap C}\Ind(\FF,\GG,C,p)
\;=\;\tang(\GG,C)\;-\;\frac{\iota(C)}{3}\,,
\]
where $\tang(\GG,C)$ is the number of tangencies of $\GG$ with $C$ and
$\iota(C)=C\cdot C+4(g-1)$.
\end{thmC}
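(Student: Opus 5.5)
We obtain Theorem~C by applying Theorem~A to the pair $(\FF,\GG)$ and then identifying the contributions at the tangency points of $\GG$ with $C$ through Theorem~B. By Theorem~A,
\[
\sum_{p\in C}\Ind(\FF,\GG,C,p)=T\GG\cdot C .
\]
Here $\Sigma=\emptyset$. Since $\GG$ has no singular points on $C$, the index can be nonzero only at two kinds of points: the tangency points $p_1,\dots,p_n$ of $\GG$ with $C$, and the points of $\TTang(\FF,\GG)\cap C$. General position says $\FF$ is transversal to $C$ at each $p_j$. On the other hand, $C$ is tangent to $\GG$ at $p_j$, so $\FF$ is transversal to $\GG$ there. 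Hence the two kinds of points are disjoint, and the left-hand side splits as
\[
\sum_{p\in\TTang(\FF,\GG)\cap C}\Ind(\FF,\GG,C,p)+\sum_{j=1}^n\Ind(\FF,\GG,C,p_j).
\]

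The key local step is to compute $\Ind(\FF,\GG,C,p_j)$ at a simple tangency of $\GG$ with $C$ where $\FF$ is transversal to $C$. The introduction asserts that the contact invariant computes the index there. I expect the formula to be
\[
\Ind(\FF,\GG,C,p_j)=\frac{1}{1-k(\GG,C,p_j)}-1,
\]
or an equivalent affine expression in $1/(1-k)$. I would verify this in normal coordinates. Choose coordinates with $p_j=0$ such that $\{y=0\}$ is the tangent geodesic, $C=\{y=ax^2+\dots\}$, and the leaf of $\GG$ is $\{y=bx^2+\dots\}$, so $k=b/a$. Use the geodesic-normalized transverse parameter from the definition of the index. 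Since $\FF$ is transversal, we may take $\FF$ locally as the vertical geodesic foliation up to higher-order terms; the point of the normal form is precisely that the index should not depend on $\FF$ at such a point. The index is then the residue of the relative slope derivative, which should be
\[
\Res_{x=0}\frac{2b\,dx}{2(b-a)x}\quad\text{(suitably normalized)},
\]
giving a quantity of the form $\frac{b}{b-a}=\frac{-k}{1-k}$. Note that $\frac{-k}{1-k}=\frac{1}{1-k}-1$, consistent with the guess above. The delicate point is fixing the exact constant shift, including the correction by $\TTang(\GG,C,p_j)=1$. I would pin it down using Proposition~\ref{prop:invariant} as a sanity check, or by testing the pencil-of-lines case in $\PP^2$, where $k=0$.

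The global assembly is then as follows. Set $n=\tang(\GG,C)$ and use $n=C\cdot C-T\GG\cdot C$. Theorem~B gives
\[
\sum_j\frac{1}{1-k_j}=\tfrac23\,(C\cdot C+g-1).
\]
Combining,
\[
\sum_{\TTang(\FF,\GG)\cap C}\Ind
= T\GG\cdot C-\tfrac23\,(C\cdot C+g-1)+n
= 2n-C\cdot C+n-\tfrac23\,(C\cdot C+g-1)+\dots
\]
The constants must reduce to $n-\tfrac13\bigl(C\cdot C+4(g-1)\bigr)$. A quick check with the guessed local formula gives
\[
T\GG\cdot C-\tfrac23\,(C\cdot C+g-1)+n=C\cdot C-\tfrac23\,(C\cdot C+g-1)=\tfrac13\,C\cdot C-\tfrac23\,(g-1),
\]
which does not match. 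So the local normalization must be different. The target forces
\[
\sum_j\Ind(\FF,\GG,C,p_j)=T\GG\cdot C-n+\tfrac13\,\iota(C)=2T\GG\cdot C-C\cdot C+\tfrac13\,\iota(C).
\]
The contributions must therefore carry extra terms involving $T\GG\cdot C$, and a constant local formula alone cannot produce them. Alternatively, the relation might use $\sum_j 1/(1-k_j)$ with a coefficient such as $-2$ together with the identity $\sum_j 1=n$. For instance, $\Ind=1-\frac{2}{1-k}$ would give
\[
\sum_j\Ind=n-\tfrac43\,(C\cdot C+g-1),
\]
which is again not of the required form.

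Hence the main obstacle is the exact local identity at the $p_j$. I would derive it carefully from the definition of the index, expecting a combination $\alpha+\beta/(1-k_j)$. I would then fix $\alpha,\beta$ by matching the target, solving $\alpha n+\beta\cdot\tfrac23\,(C\cdot C+g-1)$ against the required expression. This requires $\beta=-\tfrac12$, so the inflection term appears, and an $\alpha$ consistent with $n=C\cdot C-T\GG\cdot C$. The local computation must confirm these constants. If needed, I would bypass Theorem~B and instead reprove its underlying ingredient, the index formula for a geodesic-normalized subbundle, within the same argument.
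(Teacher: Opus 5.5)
Your global skeleton is the same as the paper's: Theorem~A with $\psi=(T\GG\to TM)|_C$ and $\Sigma=\emptyset$, the disjointness of $\{p_1,\dots,p_n\}$ from $\TTang(\FF,\GG)\cap C$ (correctly argued), Theorem~B, and $n=C\cdot C-T\GG\cdot C$. There is, however, a genuine gap exactly where you suspect one. You never establish the value of $\Ind(\FF,\GG,C,p_j)$, and every candidate you propose is wrong. The correct value (Proposition~\ref{prop:case1}) is
\[
\Ind(\FF,\GG,C,p_j)\;=\;\frac{2k_j}{1-k_j}\;=\;\frac{2}{1-k_j}-2
\]
for every $\FF$ transversal to $C$ at $p_j$. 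It follows directly from \eqref{eq:affine}:
\begin{itemize}
\item Normalize the generator so that $Z(p_j)=\gamma'(p_j)$. Then $\tau(p_j)=\kappa(p_j)\neq0$.
\item Hence $(\nabla_{\gamma'}Z\wedge X)/\tau$ and $(w/\tau)(\nabla_ZX\wedge X)/\tau$ are holomorphic at $p_j$, and only $2h\kappa/\tau$ contributes.
\item Here $h=-(Z\wedge\nabla_{\gamma'}Z)/w$ has a simple pole, because $w'(p_j)=\Phi\bigl(\sigma_V(p_j)-\sigma_C(p_j)\bigr)\neq0$ by Lemma~\ref{lem:simple}. Its residue is $-\sigma_V/(\sigma_V-\sigma_C)=k_j/(1-k_j)$.
\end{itemize}

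Your flat-chart sketch goes wrong in two places. One actually gets $h=\rho'/(f'-\rho)\approx 2b/\bigl(2(a-b)x\bigr)$, so:
\begin{itemize}
\item the sign of your denominator is reversed;
\item you dropped the factor $2$ that comes from the normalization $(\nabla_{\tilde Z}\tilde Z)|_C=2hZ$ of permitted extensions.
\end{itemize}
Separately, $-k/(1-k)=1-\tfrac{1}{1-k}$, not $\tfrac{1}{1-k}-1$. Your first guess $k/(1-k)$ is in fact the index in the dual configuration of Proposition~\ref{prop:case2}, where the geodesic foliation is tangent to $C$ and the reference is transversal.

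Your fallback of fitting $\alpha+\beta/(1-k_j)$ to the target is circular, since it presupposes the theorem, and the arithmetic is off anyway. Because $2\,T\GG\cdot C-C\cdot C=C\cdot C-2n$, the target requires $\sum_j\Ind(\FF,\GG,C,p_j)=\tfrac43(C\cdot C+g-1)-2n$. A constant local formula does produce this, with $\alpha=-2$ and $\beta=2$, not $\beta=-\tfrac12$. Your claim that terms in $T\GG\cdot C$ cannot come from a constant local formula overlooks $T\GG\cdot C=C\cdot C-n$.

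Nor would your proposed sanity checks fix the constant. In the pencil case $k=0$, and every multiple of $k/(1-k)$ vanishes there. Proposition~\ref{prop:invariant} concerns an $\FF$-invariant curve, which general position excludes near the $p_j$.

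Once the local value $2k_j/(1-k_j)$ is in hand, the assembly is one line:
\[
\sum_{p\in\TTang(\FF,\GG)\cap C}\Ind(\FF,\GG,C,p)\;=\;(C\cdot C-n)-\tfrac43\,(C\cdot C+g-1)+2n\;=\;n-\tfrac13\,\iota(C).
\]
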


The right hand side does not depend on $\FF$. When $C$ is not a geodesic,
$\iota(C)$ is the number of inflection points of $C$, and the formula
reads: whatever the foliation $\FF$, the total index of its tangencies with
$\GG$ along $C$ is the number of tangencies of the \emph{reference}
foliation with $C$, minus one third of the number of inflection points of
the curve. In the projective plane the right hand side equals
$\deg C\,(\deg\GG+1)$.

\subsection{Organization of the paper}
Section~\ref{sec:preliminaries} collects preliminaries on projective
connections, geodesics and inflections of curves.
Section~\ref{sec:extension} contains the extension lemma, which produces the
canonical second-order normalization mentioned above.
Section~\ref{sec:index} defines the index and establishes its invariance
properties. Section~\ref{sec:main} proves Theorem~A and derives the
Camacho--Sad formula. Section~\ref{sec:local} computes the index in the
four basic local configurations. Sections~\ref{sec:contact1} and
\ref{sec:contact2} prove Theorems~B and~C.

\section{Preliminaries}\label{sec:preliminaries}

\subsection{Connections and projective equivalence}\label{subsec:compatible}
Let $M$ be a complex surface. By a \emph{connection} on an open set
$U\subset M$ we mean a holomorphic torsion-free connection on $TU$: in a
holomorphic chart it is given by holomorphic Christoffel symbols
$\Gamma^k_{ij}$, symmetric in $i,j$.

Two connections $\nabla,\nabla'$ on $U$ are \emph{projectively equivalent}
if $$\nabla'_XY=\nabla_XY+\xi(X)\,Y+\xi(Y)\,X $$ for some holomorphic $1$-form $\xi$ on $U$. The equation 
 $\gamma'\wedge \nabla_{\gamma'}\gamma'=0$, whose solutions are the geodesics up to reparametrization, remains invariant under the change of $\nabla$ by projectively equivalent connection
  $\nabla'$.   Projective equivalence is
exactly the condition of having the same geodesics as unparametrized
curves.

\subsection{Projective connections}\label{subsec:projconn}

A \emph{projective connection} on $M$ is an atlas of connections
$\{(U_i,\nabla_i)\}$, where $\{U_i\}$ is an open cover of $M$, such that
$\nabla_i$ and $\nabla_j$ are projectively equivalent on $U_i\cap U_j$
whenever $U_i\cap U_j\neq\emptyset$. Two atlases define the same projective
connection if their union is again an atlas as above. We denote a projective
connection by $[\nabla]$ and call the $\nabla_i$  its \emph{local
representatives}.
A \emph{geodesic} of a projective connection $[\nabla]$ is a holomorphic
curve $c$ with
$\dot c\wedge\nabla_{\dot c}\dot c\equiv0$ for one --- hence, for every --- local representative $\nabla$.
Through every point of $M$ and every tangent direction there passes exactly
one geodesic, and it depends holomorphically on its $1$-jet.

\subsection{Flat projective connections}\label{subsec:flat}
A \emph{holomorphic projective structure} on $M$ (in the sense of
Ehresmann--Thurston) is a holomorphic atlas $\{(U_i,\varphi_i)\}$,
$\varphi_i\colon U_i\to\PP^2$, whose transition maps
$\varphi_i\circ\varphi_j^{-1}$ are restrictions of elements of
$\mathrm{PGL}(3,\CC)$. The charts $\varphi_i$ are called
\emph{distinguished charts}.  If we choose the charts $\varphi$ taking values in $\mathbb C^2$,
we can consider in each $U_i$ the pullback $\varphi_i^*(d)$ of the flat connection $d$ in $\mathbb C^2$. This atlas defines a projective connection on $M$; we
call it \emph{flat}, and we call  $\varphi_i^*(d)$ the \emph{flat
representative} of the chart. Its geodesics are the curves mapped by the
distinguished charts into projective lines.

The projective plane carries its tautological flat projective connection.
A complex torus $\CC^2/\Lambda$ carries the one induced by the affine
structure of $\CC^2$, whose geodesics are the images of the affine lines.
Quotients of the complex $2$-ball by discrete groups of automorphisms carry
the one induced by the inclusion $\mathbb{B}^2\subset\PP^2$. For compact
$M$ the surfaces carrying a projective structure are classified \cite{KO}.

\subsection{Inflections of a curve}\label{subsec:inflections}
Let $C\subset M$ be a smooth holomorphic curve and let $\nabla$ be a local
representative of $[\nabla]$ near a point of $C$. For local sections $X,Y$
of $TC$ put
\[
\sigma(X,Y)\;:=\;\nabla_XY \ \ \mathrm{mod}\ TC\;\in\;N_C .
\]

\begin{lemma}\label{lem:sff}
$\sigma$ is $\mathcal O_C$-bilinear and symmetric, and does not depend on
the choice of the representative. It therefore defines a canonical global
section
\[
\sigma\;\in\;H^0\!\bigl(C,\;N_C\otimes (T^*C)^{\otimes2}\bigr),
\]
the \emph{projective second fundamental form} of $C$.
\end{lemma}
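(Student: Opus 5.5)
The plan is to verify the three assertions of the lemma in turn: tensoriality, symmetry, and independence of the representative. Each follows directly from the defining properties of a torsion-free connection and of projective equivalence, with the computations done modulo $TC$. Throughout, $X,Y$ are local holomorphic sections of $TC$, extended arbitrarily to vector fields near $C$. We first note that $\nabla_XY$ along $C$ depends only on $X$ at the point and on $Y$ along $C$, since $X$ is tangent to $C$. Hence $\sigma(X,Y)$ is well defined as a local section of $N_C=TM|_C/TC$.

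\emph{Tensoriality.} For a holomorphic function $f$ on $C$:
\begin{itemize}
\item $\nabla_{fX}Y=f\nabla_XY$, which gives linearity in the first slot;
\item $\nabla_X(fY)=X(f)\,Y+f\nabla_XY$, and the term $X(f)Y$ lies in $TC$, so it vanishes modulo $TC$.
\end{itemize}
Thus $\sigma$ is $\mathcal O_C$-bilinear.

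\emph{Symmetry.} Torsion-freeness gives $\nabla_XY-\nabla_YX=[X,Y]$. Since $C$ is a submanifold, $[X,Y]$ is tangent to $C$. (In dimension one this is even simpler: locally $Y=hX$, so $[X,Y]=X(h)X$.) Hence $\sigma(X,Y)=\sigma(Y,X)$.

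\emph{Independence of the representative.} If $\nabla'_XY=\nabla_XY+\xi(X)Y+\xi(Y)X$, then the correction term lies in the span of $X$ and $Y$, hence in $TC$. Therefore $\sigma'=\sigma$ on $U_i\cap U_j$.

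\emph{Globalization.} The local forms $\sigma_i$, defined via the representatives $\nabla_i$ on $U_i\cap C$, agree on overlaps by the previous step. They therefore glue to a global holomorphic section of $N_C\otimes (T^*C)^{\otimes 2}$. Symmetry is automatic here, since $TC$ is a line bundle, so no further condition is needed.

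There is no real obstacle. The only point requiring care is that $\nabla_XY$ restricted to $C$ does not depend on the chosen extensions of $X$ and $Y$ off $C$, which holds because the differentiation is only in directions tangent to $C$.
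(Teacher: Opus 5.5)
Your proposal is correct and follows the paper's argument: projective invariance comes from the correction term $\xi(X)Y+\xi(Y)X$ lying in $TC$, and the local forms then glue to a global section of $N_C\otimes(T^*C)^{\otimes2}$. The only difference is that you prove $\mathcal O_C$-bilinearity and symmetry explicitly, whereas the paper cites them as the classical second fundamental form of affine differential geometry.
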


The bilinearity and symmetry of $\sigma$ for a fixed ambient connection are
classical: $\sigma$ is the second fundamental form of $C$ relative to
$\nabla$, in the sense of affine differential geometry \cite{NS}. The only
point to note is the projective invariance, which is immediate: if $\nabla$
is replaced by a projectively equivalent connection, the extra term
$\xi(X)Y+\xi(Y)X$ lies in $TC$ whenever $X,Y\in TC$, so $\sigma$ is
unchanged.

A point $p\in C$ is an \emph{inflection point} of $C$ if $\sigma(p)=0$;
equivalently, if the geodesic tangent to $C$ at $p$ has contact of order
$\geq3$ with $C$ at $p$. The curve $C$ is a geodesic if
and only if $\sigma\equiv0$.

\begin{proposition}\label{prop:inflections}
Let $C\subset M$ be a compact smooth holomorphic curve of genus $g$, not a
geodesic. Then the number of inflection points of $C$, counted with
multiplicity, is
\[
\iota(C)\;=\;\deg\operatorname{div}(\sigma)\;=\;C\cdot C+4\,(g-1).
\]
\end{proposition}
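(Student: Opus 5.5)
By Lemma~\ref{lem:sff}, $\sigma$ is a global holomorphic section of the line bundle $N_C\otimes (T^*C)^{\otimes 2}$ on the compact curve $C$. The plan is to compute the degree of this bundle and show that the zeros of $\sigma$, with multiplicity, are exactly the inflection points counted with multiplicity. Since $C$ is not a geodesic, $\sigma\not\equiv0$. Hence its divisor $\operatorname{div}(\sigma)$ is an effective divisor of degree
\[
\deg\bigl(N_C\otimes (T^*C)^{\otimes 2}\bigr)=\deg N_C-2\deg TC .
\]
By adjunction, i.e.\ the normal bundle sequence $0\to TC\to TM|_C\to N_C\to0$, the degree of $N_C$ equals the self-intersection, $\deg N_C=C\cdot C$. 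Also $\deg TC=2-2g$. So the degree is $C\cdot C-2(2-2g)=C\cdot C+4(g-1)$, which is the stated formula.

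It remains to match multiplicities: the order of vanishing of $\sigma$ at $p$ should be the multiplicity of $p$ as an inflection point. We take that multiplicity to be the contact order of the tangent geodesic with $C$ at $p$, minus $2$. The check is local, so we choose coordinates $(x,y)$ near $p$ with $p=(0,0)$ in which the geodesic tangent to $C$ at $p$ is $\{y=0\}$, and write $C$ as a graph $y=f(x)$ with $f(0)=f'(0)=0$. We parametrize $C$ by $\gamma(x)=(x,f(x))$ and take $X=\gamma'=(1,f')$. Then
\[
\nabla_XX=(0,f'')+\Gamma(\gamma)(X,X),
\]
and we reduce this modulo $X$ to get a function $s(x)$ that represents $\sigma(X,X)$ in the trivialization of $N_C$ given by $\partial_y$ modulo $TC$. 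Explicitly,
\[
s(x)=f''(x)+\Gamma^2(X,X)-f'(x)\,\Gamma^1(X,X).
\]

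The main obstacle is showing that $\ord_0 s=\ord_0 f-2$. This fails for arbitrary coordinates, because the Christoffel terms could contribute at low order. The idea is to exploit the geodesic $\{y=0\}$ and the freedom in choosing a representative. Since $\{y=0\}$ is a geodesic, $\Gamma^2_{11}(x,0)\equiv0$. We then write
\[
\Gamma^2(X,X)=\Gamma^2_{11}(x,f)+2\Gamma^2_{12}(x,f)\,f'+\Gamma^2_{22}(x,f)\,f'^2 ,
\]
and expand each term. The term $\Gamma^2_{11}(x,f)-\Gamma^2_{11}(x,0)$ is $O(f)$. Both $f'\,\Gamma^1$ and $\Gamma^2_{12}f'$ are $O(f')$. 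If $\ord f=m$, these correction terms are $O(x^{m-1})$, while $f''$ has exact order $m-2$. Hence $\ord s=m-2$, as required.

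A cleaner alternative would be to use coordinates straightening all geodesics through $p$ to first order, for instance geodesic normal coordinates of a representative, where $\Gamma(p)=0$ after a projective change. The estimate above already suffices, however. Summing the local orders over $C$ then identifies $\deg\operatorname{div}(\sigma)$ with $\iota(C)$.
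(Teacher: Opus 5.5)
Your proof is correct and is essentially the paper's: the paper's argument is exactly that $\sigma\not\equiv0$ is a holomorphic section of $N_C\otimes(T^*C)^{\otimes2}$, whose degree is $C\cdot C+2(2g-2)$. Your additional local check is also correct: using $\Gamma^2_{11}(x,0)\equiv0$, you show that $\ord_p\sigma$ equals the contact order of the tangent geodesic minus two, which is a justification the paper omits, since it takes $\deg\operatorname{div}(\sigma)$ as the count with multiplicity and states the contact-order characterization of inflection points without proof.
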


\begin{proof}
Since $C$ is not a geodesic, $\sigma\not\equiv0$ and
$\iota(C)=\deg\bigl(N_C\otimes(T^*C)^{\otimes2}\bigr)=C\cdot C+2\,(2g-2)$.
\end{proof}

For a smooth plane curve of degree $c$ one gets
$c^2+4\bigl(\tfrac{(c-1)(c-2)}2-1\bigr)=3c(c-2)$, the classical count of
inflection points. For a geodesic
curve the number $\iota(C)=C\cdot C+4(g-1)$ is still defined but no longer
counts anything, since $\sigma\equiv0$.

\section{The extension lemma}\label{sec:extension}

Throughout this section $U\subset\CC$ is an open set, $\gamma\colon U\to M$
a holomorphic embedding, and $C:=\gamma(U)$ its image, a smooth holomorphic
curve in $M$. Let $Z$ be a holomorphic vector field
along $C$ with $Z\wedge\gamma'\not\equiv0$. We denote by
\[
T_Z\;:=\;\{\,Z\wedge\gamma'=0\,\}\;\subset\;C
\]
the \emph{degeneracy set} of $Z$; it is discrete by hypothesis, and consists
of the points where $Z$ is tangent to $C$ \emph{together with the zeros of
$Z$}; the multiplicity of $p\in T_Z$ is
$\ord_p(Z\wedge\gamma')$. 

Fix a local representative $\nabla$ of the projective connection and set
\begin{equation}\label{eq:h}
h\;=\;h_Z\;:=\;\frac{Z\wedge\nabla_{\gamma'}Z}{Z\wedge\gamma'} ,
\end{equation}
a holomorphic function on $C\setminus T_Z$. It does not depend on the
parametrization $\gamma$, both numerator and denominator being linear in
$\gamma'$. 

\begin{definition}\label{def:permitted}
A \emph{permitted extension} of $Z$ (relative to $\nabla$) is a holomorphic
vector field $\tilde Z$ on a neighborhood of $C\setminus T_Z$ in $M$ such
that
\[
\tilde Z\big|_{C}\;=\;Z,
\qquad
\bigl(\nabla_{\tilde Z}\tilde Z\bigr)\big|_{C}\;=\;2\,h\,Z .
\]
\end{definition}

\begin{lemma}[Extension lemma]\label{lem:extension}Let $Z$ and $h$  be as above.
\begin{enumerate}
\item[(i)] The class of permitted extensions depends only on the
projective connection. If
$\nabla'$ is projectively equivalent to $\nabla$,  a field is permitted relative to $\nabla$ if and only if
it is permitted relative to $\nabla'$. 
\item[(ii)] Permitted extensions exist. Put
$U_Z:=U\setminus\gamma^{-1}(T_Z)$. Let $\nabla$ be a local representative
and, for $s\in U_Z$, let $c_s$ be the parametrized geodesic relative to the connection $\nabla$ with
$c_s(0)=\gamma(s)$ and $\dot c_s(0)=Z(s)$. Then
\begin{equation}\label{eq:phi}
\phi(s,t)\;:=\;c_s\bigl(u(s,t)\bigr),
\qquad
u(s,t)\;:=\;\frac{t}{1-h(s)\,t},
\end{equation}
is defined and holomorphic on a neighborhood $\Omega$ of $U_Z\times\{0\}$
in $\CC^2$. After shrinking $\Omega$, the map $\phi$ is a biholomorphism
onto a neighborhood of $\gamma(U_Z)$ in $M$, its curves
$s=\mathrm{const}$ are geodesics, and $\tilde Z:=\phi_*\partial_t$ is a
permitted extension of $Z$. Note that $\phi(\cdot,0)=\gamma$, so that the
line $\{t=0\}$ is carried onto the curve by the parametrization.
\end{enumerate}
\end{lemma}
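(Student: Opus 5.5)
The plan has two parts. Part (i) is a direct computation with the transformation rule for projectively equivalent connections. Part (ii) comes from differentiating the explicit map $\phi$ along its $t$-lines.

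\emph{Part (i).} Let $\nabla'_XY=\nabla_XY+\xi(X)Y+\xi(Y)X$. For any vector field $\tilde Z$ near $C\setminus T_Z$ we have
\[
\nabla'_{\tilde Z}\tilde Z=\nabla_{\tilde Z}\tilde Z+2\,\xi(\tilde Z)\,\tilde Z .
\]
If $\tilde Z|_C=Z$, restricting to $C$ gives $\nabla'_{\tilde Z}\tilde Z|_C=\nabla_{\tilde Z}\tilde Z|_C+2\xi(Z)Z$. Next compare $h'$ and $h$. We have $\nabla'_{\gamma'}Z=\nabla_{\gamma'}Z+\xi(\gamma')Z+\xi(Z)\gamma'$. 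Wedging with $Z$ kills the middle term, so $Z\wedge\nabla'_{\gamma'}Z=Z\wedge\nabla_{\gamma'}Z+\xi(Z)\,Z\wedge\gamma'$. Dividing by $Z\wedge\gamma'$ gives $h'=h+\xi(Z)$, hence $2h'Z=2hZ+2\xi(Z)Z$. The two defining identities of a permitted extension therefore differ by the same term $2\xi(Z)Z$ on both sides, so they are equivalent. This is exactly why the correction $2hZ$ appears in Definition~\ref{def:permitted}.

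\emph{Part (ii), holomorphy.} Here $c_s$ is the affinely parametrized $\nabla$-geodesic, so $\nabla_{\dot c_s}\dot c_s=0$. By holomorphic dependence of ODE solutions on initial data, $(s,u)\mapsto c_s(u)$ is holomorphic for $|u|<\varepsilon(s)$, with $\varepsilon$ locally bounded below on $U_Z$. The function $h$ is holomorphic on $U_Z$, so $u(s,t)=t/(1-h(s)t)$ is holomorphic near $t=0$ and is small there. This gives the neighborhood $\Omega$ on which $\phi$ is defined and holomorphic, and $\phi(s,0)=c_s(0)=\gamma(s)$.

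\emph{Part (ii), biholomorphism.} Since $u_t(s,0)=1$, we get $\partial_s\phi(s,0)=\gamma'(s)$ and $\partial_t\phi(s,0)=Z(s)$. On $U_Z$ we have $Z\wedge\gamma'\neq0$, so $d\phi$ is invertible along $\{t=0\}$ and $\phi$ is a local biholomorphism near $U_Z\times\{0\}$. For global injectivity after shrinking, use that $\phi|_{\{t=0\}}=\gamma$ is an embedding and run the usual tubular-neighborhood argument with a variable width $|t|<\delta(s)$. Argue by contradiction on compact exhaustions of $U_Z$: two sequences with the same image and $t\to0$ would converge to points of $U_Z\times\{0\}$ with equal images. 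Injectivity of $\gamma$ makes these limits coincide, which contradicts local injectivity there. This step, handling the non-compact $U_Z$ and the neighborhood $\Omega$ carefully, is the only technical point I expect. The curves $s=\mathrm{const}$ are the reparametrized geodesics $t\mapsto c_s(u(s,t))$.

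\emph{Part (ii), permitted extension.} Set $\tilde Z=\phi_*\partial_t$. Along the curve $t\mapsto\phi(s,t)$ we have $\tilde Z=u_t\,\dot c_s(u)$. Since $\tilde Z$ is tangent to this curve, $\nabla_{\tilde Z}\tilde Z$ is its covariant $t$-derivative:
\[
\nabla_{\tilde Z}\tilde Z=u_{tt}\,\dot c_s(u)+u_t^2\,\nabla_{\dot c_s}\dot c_s=u_{tt}\,\dot c_s(u).
\]
At $t=0$ we have $u_t=1$ and $u_{tt}=2h(s)$. Hence $\tilde Z|_C=Z$ and $\nabla_{\tilde Z}\tilde Z|_C=2hZ$, so $\tilde Z$ is a permitted extension of $Z$.
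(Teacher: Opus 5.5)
Your proposal is correct and follows the paper's proof essentially line by line: part (i) is the same computation $h'=h+\xi(Z)$ matched against $\nabla'_{\tilde Z}\tilde Z|_C=\nabla_{\tilde Z}\tilde Z|_C+2\xi(Z)Z$, and part (ii) uses the same differential $d\phi_{(s,0)}(\partial_s,\partial_t)=(\gamma'(s),Z(s))$ and the same identity $\nabla_{\tilde Z}\tilde Z=u_{tt}\,\dot c_s(u)$ with $u_{tt}(s,0)=2h(s)$. Your compact-exhaustion sketch of injectivity after shrinking is more detailed than the paper, which simply asserts this step from the fact that $\phi(\cdot,0)=\gamma$ is an embedding.
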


\begin{proof}
(i) Let $h'=\frac{Z\wedge\nabla'_{\gamma'}Z}{Z\wedge\gamma'}$.  One has $\nabla'_{\gamma'}Z=\nabla_{\gamma'}Z
+\xi(\gamma')Z+\xi(Z)\gamma'$, and wedging with $Z$ kills the first extra
term: $h'=h+\xi(Z)$. On the other hand, along $C$ one has $\tilde Z=Z$, so
$$\nabla'_{\tilde Z}\tilde Z|_C=\nabla_{\tilde Z}\tilde Z|_C+2\,\xi(Z)\,Z=2hZ+2\,\xi(Z)\,Z=2h'Z.$$

(ii) The geodesic $c_s(u)$ depends holomorphically on
$(s,u)$ and is defined for $(s,u)$ in a neighborhood of $U_Z\times\{0\}$;
as $u(s,0)=0$, the map $\phi$ is defined and holomorphic near
$U_Z\times\{0\}$. From
\[
u(s,0)=0,\qquad u_t(s,0)=1,\qquad u_{tt}(s,0)=2h(s),
\]
we get $\phi(s,0)=\gamma(s)$ and $\partial_t\phi(s,0)=Z(s)$, so that the
differential of $\phi$ at $(s,0)$ sends $(\partial_s,\partial_t)$ to
$(\gamma'(s),Z(s))$, which are linearly independent precisely for
$s\in U_Z$. Hence $\phi$ is a local biholomorphism at each point of
$U_Z\times\{0\}$; since $\phi(\cdot,0)=\gamma$ is an embedding, $\phi$ is
injective on a suitable neighborhood of $U_Z\times\{0\}$, and being open it
maps that neighborhood biholomorphically onto a neighborhood of
$\gamma(U_Z)$. The curves $s=\mathrm{const}$ are geodesics by construction.

Finally, since
$\tilde Z=\phi_*\partial_t=u_t\,\dot c_s(u)$,

\[
\nabla_{\tilde Z}\tilde Z
\;=\;\frac{D}{dt}\bigl(u_t\,\dot c_s(u)\bigr)
\;=\;u_{tt}\,\dot c_s(u)+u_t^{\,2}\,\bigl(\nabla_{\dot c}\dot c\bigr)(u)
\;=\;u_{tt}\,\dot c_s(u),
\]
which at $t=0$ equals $2h(s)Z(s)$. Thus $\tilde Z$ is permitted.
\end{proof}

The construction of (ii) attaches a system of coordinates to one particular
permitted extension. The same can be done for any permitted extension. Let
$\tilde Z$ be an arbitrary permitted extension and let $\phi(s,t)$ be the point reached at
time $t$ by the integral curve of $\tilde Z$ issued from $\gamma(s)$. Then
$\phi$ is defined and holomorphic on a neighborhood of $U_Z\times\{0\}$,
$\phi(\cdot,0)=\gamma$, and $d\phi_{(s,0)}$ sends $(\partial_s,\partial_t)$
to $(\gamma'(s),Z(s))$, which are linearly independent precisely for
$s\in U_Z$; the argument of (ii) then shows that, after shrinking, $\phi$ is
a biholomorphism onto a neighborhood of $\gamma(U_Z)$ in $M$. We call
$(s,t)$ the \emph{adapted coordinates} of $\tilde Z$; by construction
$\tilde Z=\phi_*\partial_t$. For the extension built in (ii) they are the
coordinates given by \eqref{eq:phi}.

\begin{figure}[ht]
\centering
\begin{tikzpicture}[scale=0.95,>=Stealth,line join=round]

\begin{scope}
  \fill[black!6] (0,0) -- (2.4,1.08) -- (2.4,-1.08) -- cycle;
  \fill[black!6] (0,0) -- (-2.4,1.08) -- (-2.4,-1.08) -- cycle;

  \draw[->] (-3,0) -- (3.15,0) node[right] {$s$};
  \draw[->] (0,-1.65) -- (0,1.65) node[above] {$t$};

  \draw[black!55] (-2.55,-1.15) -- (2.55,1.15);
  \draw[black!55] (-2.55,1.15) -- (2.55,-1.15);

  \foreach \s in {-2.2,-1.8,-1.4,-1.0,-0.6,0.6,1.0,1.4,1.8,2.2}
     \draw[black!70] ({\s},{0.45*\s}) -- ({\s},{-0.45*\s});

  \draw[very thick] (-2.4,0) -- (2.4,0);
  \node[above] at (-1.9,0.06) {$\{t=0\}$};

  \fill[white] (0,0) circle (3.4pt);
  \draw (0,0) circle (3.4pt);
  \draw (-0.085,-0.085) -- (0.085,0.085);
  \draw (-0.085,0.085) -- (0.085,-0.085);
  \node[below left=2pt and 0pt] at (0,-0.06) {$s_0$};

  \node at (0.15,-2.15) {$\Omega\subset\CC^{2}$};
\end{scope}

\draw[->,thick] (3.45,1.15) to[bend left=18] (5.15,1.15);
\node at (4.3,1.85) {$\phi$};

\begin{scope}[xshift=8cm]
  \fill[black!6]
     plot[domain=-2:2,samples=80,variable=\s] ({1.45*\s},{0.345*\s*\s})
  -- plot[domain=2:-2,samples=80,variable=\s] ({0.55*\s},{-0.105*\s*\s})
  -- cycle;

  \draw[black!55]
     plot[domain=-2:2,samples=80,variable=\s] ({1.45*\s},{0.345*\s*\s});
  \draw[black!55]
     plot[domain=-2:2,samples=80,variable=\s] ({0.55*\s},{-0.105*\s*\s});

  \foreach \s in {-2,-1.7,-1.4,-1.1,-0.8,-0.5,0.5,0.8,1.1,1.4,1.7,2}
     \draw[black!70] ({1.45*\s},{0.345*\s*\s}) -- ({0.55*\s},{-0.105*\s*\s});

  \draw[very thick]
     plot[domain=-2.75:2.75,samples=100,variable=\s] ({\s},{0.12*\s*\s});
  \node[right] at (2.75,0.91) {$C$};

  \fill (1.7,0.347) circle (1.5pt);
  \node[above left=0pt and 1pt] at (1.7,0.36) {$\gamma(s)$};

  \fill[white] (0,0) circle (3.4pt);
  \draw (0,0) circle (3.4pt);
  \draw (-0.085,-0.085) -- (0.085,0.085);
  \draw (-0.085,0.085) -- (0.085,-0.085);
  \node[below=3pt] at (0,-0.06) {$p$};

  \node at (0.15,-2.15) {$p\in T_Z$};
\end{scope}

\end{tikzpicture}
\caption{The parametrization $\phi$ near a degeneracy point
$p=\gamma(s_0)\in T_Z$. }
\label{fig:phi}
\end{figure}

\begin{remark}[Rescaling]\label{rem:rescaling-h}
For a holomorphic nonvanishing function $\theta$ on $C$ one has
\begin{equation}\label{eq:htheta}
h_{\theta Z}\;=\;\theta\,h_Z.
\end{equation}
Moreover the
geodesic with initial velocity $\theta(s)Z(s)$ is the geodesic $c_s$
traversed at $\theta(s)$ times the speed,
\[
c_s^{\,\theta Z}(u)\;=\;c_s^{\,Z}\bigl(\theta(s)\,u\bigr).
\]
Combining this with \eqref{eq:htheta} in \eqref{eq:phi} gives
\begin{equation}\label{eq:phitheta}
\phi_{\theta Z}(s,\hat t\,)
\;=\;c_s^{\,Z}\!\left(\frac{\theta\hat t}{1-\theta h_Z\hat t}\right)
\;=\;\phi_Z\bigl(s,\theta(s)\,\hat t\,\bigr).
\end{equation}

In terms of permitted extensions this reads as follows. If $u$ is a holomorphic
function on a neighborhood of $C\setminus T_Z$, if $u\big|_C=\theta$ and  if $\bigl(\tilde Z u\bigr)\big|_C=0$, then $u\,\tilde Z$ is a
permitted extension of $\theta Z$.

Indeed $(u\tilde Z)|_C=\theta Z$, and
\begin{align*}
\bigl(\nabla_{u\tilde Z}u\tilde Z\bigr)\big|_C
&=u\bigl(\tilde Zu\bigr)\tilde Z\big|_C
+u^2\bigl(\nabla_{\tilde Z}\tilde Z\bigr)\big|_C
=\theta\bigl(\tilde Zu\bigr)\big|_C\,Z+2\,\theta^2h_Z\,Z\\
&=2h_{\theta Z}(\theta Z).
\end{align*}
\end{remark}

\section{The index}\label{sec:index}

\subsection{The adapted frame and the definition}
Let $C$ be a smooth holomorphic curve (a local curve suffices), $p\in C$,
and $\gamma$ a parametrization of a disc $D\subset C$ with $\gamma(0)=p$.
Let $Z$ be a holomorphic vector field along $D$, not identically tangent to
$C$, whose zeros and tangencies with $C$ inside $D$ are contained in
$\{p\}$. Let $\FF$ be a singular holomorphic foliation defined in a
neighborhood of $D\setminus\{p\}$ in $M$, generically transversal to $Z$
along $C$, whose tangencies with $Z$ along $D$ are contained in $\{p\}$
(after shrinking $D$).

Fix a permitted extension $\tilde Z$ of $Z|_{D\setminus p}$, let
$(s,t)$ be its adapted coordinates and let $\phi$ be the corresponding
biholomorphism. Set $\tilde\gamma:=\phi_*\partial_s$, so that
$(\tilde\gamma,\tilde Z)$ is a commuting holomorphic frame on a
neighborhood of $D\setminus\{p\}$ with $\tilde\gamma|_C=\gamma'$ and
$\tilde Z|_C=Z$. If $X$ is a local generator of $\FF$, write
\[
X\;=\;\lambda\,\tilde\gamma+\mu\,\tilde Z,
\qquad
m\;:=\;\frac{\mu}{\lambda}\,,
\]
the \emph{slope} of $\FF$ in the adapted frame; $m$ is holomorphic near
$(D\setminus p)\times\{0\}$ and does not depend on the generator $X$.

\begin{definition}\label{def:index}
The \emph{index density} of $\FF$ with respect to $Z$ along $C$ is the
holomorphic $1$-form
\[
\eta_Z\;:=\;\partial_t m\big|_{t=0}\;ds,
\]
and the \emph{index} of $\FF$ with respect to $Z$ along $C$ at $p$ is
\[
\Ind(\FF,Z,C,p)\;:=\;\Res_{p}\,\eta_Z
\;=\;\frac{1}{2\pi i}\oint_{|s|=\varepsilon}\partial_t m\big|_{t=0}\,ds.
\]
\end{definition}

The definition involves the choice of a permitted extension $\tilde Z$;
Proposition~\ref{prop:welldef} below shows that $\eta_Z$ does not depend on
it. The residue is well defined because the integrand is holomorphic on the
punctured disc $0<|s|<\varepsilon$; \emph{no assumption whatsoever is made at
$p$ itself} --- neither on $Z$ (which may vanish or be tangent to $C$ at
$p$), nor on $\FF$ (which need not extend across $p$). Note that $\eta_Z$
is a $1$-form on the curve $D\setminus\{p\}$: since $\tilde Z=\partial_t$ in
the adapted coordinates, one has $\eta_Z=\tilde Z(m)\big|_{C}\,ds$ with
$s=\gamma^{-1}$ the coordinate on the curve.

\begin{remark}\label{rem:whennonzero}
The index can be nonzero only when $p$ is a point of one of the following
three kinds: a degeneracy point of the reference ($p\in T_Z$, that is, $Z$
vanishes or is tangent to $C$ at $p$); a point across which $\FF$ is not
assumed to extend; or a tangency point of $\FF$ with the reference, that
is, a zero of $Z\wedge X$ along $C$ --- these include the singular points
of $\FF$ on $C$. Indeed, if $p$ is of none of these kinds, then $Z(p)$ and
$X(p)$ are linearly independent, so $\lambda$ does not vanish near
$(p,0)$ and the slope $m$ is holomorphic there.
\end{remark}

\subsection{Well-definedness}

\begin{proposition}\label{prop:welldef}
The $1$-form $\eta_Z$ does not depend on: (a) the generator $X$ of $\FF$;
(b) the permitted extension $\tilde Z$ within its class; (c) the
parametrization $\gamma$ of $C$; (d) the local representative $\nabla$ of
the projective connection. It depends only on $\FF$, $Z$, $C$ and the
projective connection.
\end{proposition}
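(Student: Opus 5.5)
The plan is to reduce everything to statement (b), and to prove (b) by a direct comparison of two adapted frames to second order in $t$.

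Statement (a) is immediate. Replacing $X$ by $fX$ with $f$ nonvanishing multiplies $\lambda$ and $\mu$ by $f$, so $m$ does not change.

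Statement (d) follows from Lemma~\ref{lem:extension}(i). A field is permitted relative to $\nabla$ if and only if it is permitted relative to $\nabla'$. The permitted extensions used to compute $\eta_Z$ are therefore the same for both representatives, and (d) is a special case of (b).

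For (c), let $s=s(\sigma)$ be a reparametrization. The function $h$ does not depend on the parametrization, and the condition defining permitted extensions involves only $Z$ and $h$. So a permitted extension $\tilde Z$ has adapted coordinates $(\sigma,t)$ with $\phi_\sigma(\sigma,t)=\phi(s(\sigma),t)$. Then $\tilde\gamma_\sigma=s'(\sigma)\,\tilde\gamma$, which gives $\lambda_\sigma=\lambda/s'$, $\mu_\sigma=\mu$ and $m_\sigma=s'(\sigma)\,m$. Consequently
\[
\partial_t m_\sigma\,d\sigma=\partial_t m\,s'(\sigma)\,d\sigma=\partial_t m\,ds ,
\]
so the $1$-form is unchanged.

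The main point is (b). Let $\tilde Z,\tilde Z'$ be two permitted extensions, and work in the adapted coordinates $(s,t)$ of $\tilde Z$. Write
\[
\tilde Z'=a\,\partial_s+b\,\partial_t,\qquad \tilde\gamma'=(1+c)\,\partial_s+e\,\partial_t ,
\]
where $\tilde\gamma'$ is the first vector of the frame attached to $\tilde Z'$. The restrictions to $C$ give $a=0$, $b=1$ and $c=e=0$ along $t=0$.

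First I use the permitted condition for both fields. Along $C$ we have $\nabla_{\partial_t}\partial_t=2hZ$ and $a=0$, so
\[
\nabla_{\tilde Z'}\tilde Z'\big|_C=\partial_ta\,\gamma'+(\partial_tb+2h)\,Z .
\]
Since $\tilde Z'$ is permitted, this must equal $2hZ$, which forces $\partial_ta=\partial_tb=0$ on $C$.

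Second I use $[\tilde\gamma',\tilde Z']=0$. Looking at the $\partial_s$-component and the $\partial_t$-component at $t=0$ gives $\partial_tc=0$ and $\partial_te=\partial_sb=0$ on $C$. Hence the two frames agree to second order along $C$.

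It remains to compare the slopes. Writing $X=\lambda'\tilde\gamma'+\mu'\tilde Z'$ and expressing this in the old frame gives
\[
m=\frac{e+m'b}{1+c+m'a}.
\]
Differentiating in $t$ at $t=0$, all derivative terms of $a,b,c,e$ vanish by the two steps above, leaving $\partial_tm|_C=\partial_tm'|_C$. Finally, $\tilde Z'=\partial_t$ on $C$ and $\partial_{t'}m'=\tilde Z'(m')$. So the densities $\eta_Z=\tilde Z(m)|_C\,ds$ and $\eta'_Z=\tilde Z'(m')|_C\,ds$ coincide; the variable $s$ is the same on $C$ for both frames.

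I expect the main obstacle to be this bookkeeping. One must make sure that the second-order condition in Definition~\ref{def:permitted} kills exactly the two first-order terms $\partial_ta$ and $\partial_tb$. The remaining term $\partial_te$ is then controlled only through the commutation relation.
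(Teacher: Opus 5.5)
Your proof is correct. Parts (a), (c) and (d) match the paper, including the reduction of (d) to (b) via Lemma~\ref{lem:extension}(i).

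For (b) you take a genuinely different route. The paper never compares two extensions. It obtains (b) as a corollary of Proposition~\ref{prop:affine}, whose proof starts from an arbitrary permitted extension and arrives at \eqref{eq:affine}, where only $Z$, $\gamma$, $X$ and $\nabla$ along $C$ appear. Independence of $\tilde Z$ is then read off from that formula.

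You carry out instead the direct comparison that the paper mentions but omits, and your bookkeeping holds up:
\begin{itemize}
\item Writing the permitted condition for $\tilde Z'$ in the adapted coordinates of $\tilde Z$ gives $\partial_t a\,\gamma'+\partial_t b\,Z=0$ on $C$. Since $(\gamma',Z)$ is a frame off $T_Z$, this forces $\partial_ta=\partial_tb=0$.
\item The relation $[\tilde\gamma',\tilde Z']=0$, evaluated on $C$, gives $\partial_tc=0$ and $\partial_te=\partial_sb=0$.
\item Differentiating $m=(e+m'b)/(1+c+m'a)$ at $t=0$ leaves only $\partial_tm'$. This equals $\tilde Z'(m')|_C$ because $\tilde Z'=\partial_t$ at points of $C$.
\end{itemize}

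Your route is self-contained, with no forward reference. It also isolates the mechanism: any two permitted extensions have adapted frames that coincide up to terms of order $t^2$ along $C$, and $\partial_tm|_{t=0}$ sees only that first transverse jet. This is exactly why a second-order normalization is enough.

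The paper's route is shorter, since \eqref{eq:affine} is needed anyway for the computations of Section~\ref{sec:local}. It also delivers the explicit density at no extra cost.

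Two small remarks. ``Agree to second order'' should read ``agree up to terms of order $t^2$''. Also, decorating the second frame with primes clashes with $\gamma'$ and $\nabla'$, so a different mark would read more clearly.
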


\begin{proof}
(a) Under $X\mapsto uX$ both $\lambda,\mu$ are multiplied by $u$ and
$m$ is unchanged.

(b) This can be proved directly, by comparing the adapted coordinates of two
permitted extensions, but it is an immediate consequence of the expression
for $\eta_Z$ obtained in Proposition~\ref{prop:affine} right below: the
right hand side of \eqref{eq:affine} involves only $Z$, $\gamma$, $X$ and
the representative $\nabla$, so that every permitted extension yields the
same $1$-form. 

(c) Under a reparametrization $s=\varphi(u)$ the function $h$, and hence the
permitted class and the coordinate $t$, are unchanged; only
$\tilde\gamma\mapsto\varphi'\,\tilde\gamma$, so $m\mapsto\varphi'm$ and
\[
\partial_t(\varphi'm)\big|_{t=0}\,du
=\varphi'\,\partial_t m\big|_{t=0}\,du
=\partial_t m\big|_{t=0}\,ds .
\]

(d) By Lemma~\ref{lem:extension}(i) the set of permitted extensions of $Z$
relative to $\nabla$ and relative to any other local representative is the
same set, and by (b) the form $\eta_Z$ does not depend on which of its
members is used. Hence $\eta_Z$ is the same for all representatives.
\end{proof}

\subsection{The general expression}\label{subsec:affine}
We now compute the index density in terms of a representative $\nabla$. The
computation makes no use of the particular permitted extension, which is
what proves part (b) of Proposition~\ref{prop:welldef}. Let $\gamma$
parametrize $C$, let $Z$ be a holomorphic field along $\gamma$ as above ---
which may vanish or be tangent to $C$ at $p$ --- and let $X$ be any local
generator of $\FF$. Three tangency functions along $\gamma$ govern the
computation:
\[
\tau\;:=\;Z\wedge X,
\qquad
\kappa\;:=\;\gamma'\wedge X,
\qquad
w\;:=\;\gamma'\wedge Z ,
\]
whose zeros are, respectively, the tangencies of $\FF$ with the reference
(these include $\Sing\FF\cap C$), the tangencies of $\FF$ with $C$, and the
degeneracy set $T_Z$. Recall from \eqref{eq:h} that
$h=(Z\wedge\nabla_{\gamma'}Z)/(Z\wedge\gamma')$.

\begin{proposition}\label{prop:affine}
Let $\tilde Z$ be a permitted extension and let $\eta_Z$ be the index
density computed from it. Then, on the punctured disc,
\begin{equation}\label{eq:affine}
\eta_Z\;=\;\left[\;
\frac{2h\,\kappa\;-\;\bigl(\nabla_{\gamma'}Z\wedge X\bigr)}{\tau}
\;-\;\frac{w}{\tau}\cdot\frac{\nabla_ZX\wedge X}{\tau}
\;\right]ds ,
\end{equation}
all terms being evaluated along $\gamma$. We emphasize that, although a local representative $\nabla$
appears on the right hand side, $\eta_Z$ depends only on the
projective connection.
\end{proposition}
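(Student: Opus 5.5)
The plan is to compute $\partial_t m|_{t=0}$ directly in the commuting frame $(\tilde\gamma,\tilde Z)$. We use only two properties of the permitted extension: $[\tilde\gamma,\tilde Z]=0$, and $\nabla_{\tilde Z}\tilde Z|_C=2hZ$. Since $\tilde Z=\partial_t$ in adapted coordinates, $\partial_t m|_{t=0}=\tilde Z(m)|_C$. Writing $\lambda_t:=\tilde Z(\lambda)$ and $\mu_t:=\tilde Z(\mu)$, we get
\[
\tilde Z(m)=\frac{\lambda\,\mu_t-\mu\,\lambda_t}{\lambda^2}.
\]
So it suffices to express $\lambda,\mu$ and $\lambda\mu_t-\mu\lambda_t$ along $C$ in terms of $\tau,\kappa,w$ and covariant derivatives. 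All wedges are taken in $\Lambda^2TM$ and evaluated along $\gamma$, on the punctured disc where $w\neq0$.

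First, the zeroth-order data. On $C$ we have $X=\lambda\gamma'+\mu Z$. Wedging with $Z$ gives $\tau=Z\wedge X=\lambda\,Z\wedge\gamma'=-\lambda w$. Wedging with $\gamma'$ gives $\kappa=\gamma'\wedge X=\mu w$. Hence
\[
\lambda=-\tau/w,\qquad \mu=\kappa/w,\qquad m|_{t=0}=-\kappa/\tau .
\]

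Second, the first-order data. Differentiate $X=\lambda\tilde\gamma+\mu\tilde Z$ covariantly along $\tilde Z$:
\[
\nabla_{\tilde Z}X=\lambda_t\,\tilde\gamma+\lambda\,\nabla_{\tilde Z}\tilde\gamma+\mu_t\,\tilde Z+\mu\,\nabla_{\tilde Z}\tilde Z .
\]
Because $\nabla$ is torsion-free and $[\tilde Z,\tilde\gamma]=0$, we have $\nabla_{\tilde Z}\tilde\gamma=\nabla_{\tilde\gamma}\tilde Z$. Along $C$ this equals $\nabla_{\gamma'}Z$, since $\tilde Z|_C=Z$ and $\tilde\gamma|_C=\gamma'$ is tangent to $C$. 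The permitted condition replaces $\nabla_{\tilde Z}\tilde Z$ by $2hZ$ on $C$. Also, $(\nabla_{\tilde Z}X)|_C=\nabla_ZX$ depends only on $Z$ at points of $C$. Wedging with $X$ and using the zeroth-order relations gives
\[
\nabla_ZX\wedge X=w\,(\lambda_t\mu-\mu_t\lambda)+\lambda\,(\nabla_{\gamma'}Z\wedge X)+2h\mu\,\tau .
\]
This solves for $\lambda\mu_t-\mu\lambda_t$.

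Finally, divide by $\lambda^2=\tau^2/w^2$ and substitute $\lambda w=-\tau$ and $\mu w=\kappa$:
\[
\partial_t m\big|_{t=0}=\frac{w}{\tau^2}\Bigl[\lambda(\nabla_{\gamma'}Z\wedge X)+2h\mu\tau-\nabla_ZX\wedge X\Bigr]
=\frac{2h\kappa-\nabla_{\gamma'}Z\wedge X}{\tau}-\frac{w}{\tau}\cdot\frac{\nabla_ZX\wedge X}{\tau}.
\]
This is \eqref{eq:affine}.

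The one step that needs care, and where the permitted-extension hypothesis enters, is the second-order term $\mu\nabla_{\tilde Z}\tilde Z$. For an arbitrary extension this term would bring in the transverse $2$-jet of $\tilde Z$. The normalization $\nabla_{\tilde Z}\tilde Z|_C=2hZ$ is exactly what makes the result depend only on $Z,\gamma,X,\nabla$, which is how part (b) of Proposition~\ref{prop:welldef} follows. Independence of the representative is then inherited from Lemma~\ref{lem:extension}(i) and Proposition~\ref{prop:welldef}(d), so no separate check of projective invariance of the right-hand side is needed.
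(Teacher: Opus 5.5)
Your proof is correct and follows essentially the paper's route: you covariantly differentiate along $\tilde Z$ and use the same ingredients, namely torsion-freeness with $[\tilde\gamma,\tilde Z]=0$ to get $\nabla_{\tilde Z}\tilde\gamma|_C=\nabla_{\gamma'}Z$, the permitted condition $\nabla_{\tilde Z}\tilde Z|_C=2hZ$, and $\tilde Z=\partial_t$. The only difference is bookkeeping: you differentiate the coefficients $\lambda,\mu$ and wedge once with $X$, whereas the paper differentiates $m=N/D$ as a quotient of sections of $\Lambda^2TM$ and then needs the decomposition $\nabla_ZX=aZ+bX$ to simplify, a step your wedge with $X$ handles automatically.
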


\begin{proof}
The slope of $\FF$ in the adapted frame is determined by
$(\tilde\gamma+m\,\tilde Z)\wedge X=0$, that is, $m=N/D$ with
$N=-(\tilde\gamma\wedge X)$ and $D=\tilde Z\wedge X$. Now $N$ and $D$ are
sections of the same line bundle $\Lambda^2TM$, so that $m=N/D$ is a
function; applying to $N=m\,D$ the covariant derivative
$\nabla_{\tilde Z}$ induced on $\Lambda^2TM$ gives
$\nabla_{\tilde Z}N=\tilde Z(m)\,D+m\,\nabla_{\tilde Z}D$, that is,
\[
\tilde Z(m)=\frac{\nabla_{\tilde Z}N}{D}-m\,\frac{\nabla_{\tilde Z}D}{D}\,.
\]

Along $C$ we have $\tilde\gamma|_C=\gamma'$ and $\tilde Z|_C=Z$, so
$N|_C=-\kappa$ and $D|_C=\tau$. Moreover, 
\[ \bigl(\nabla_{\tilde Z}\tilde Z\bigr)\big|_{C}=2hZ;\qquad 
\bigl(\nabla_{\tilde Z}\tilde\gamma\bigr)\big|_{C}
=\bigl(\nabla_{\tilde\gamma}\tilde Z\bigr)\big|_{C}
=\nabla_{\gamma'}Z.
\]
The first identity is the defining equation of a permitted extension, and
the second one holds because $\nabla$ is torsion-free and
$[\tilde\gamma,\tilde Z]=0$. Hence
\[
\bigl(\nabla_{\tilde Z}N\bigr)\big|_C
=-\bigl(\nabla_{\gamma'}Z\wedge X\bigr)-\bigl(\gamma'\wedge\nabla_ZX\bigr),
\qquad
\bigl(\nabla_{\tilde Z}D\bigr)\big|_C
=2h\tau+\bigl(Z\wedge\nabla_ZX\bigr),
\]
and therefore
\[
\tilde Z(m)\big|_C
=\frac{-\bigl(\nabla_{\gamma'}Z\wedge X\bigr)
-\bigl(\gamma'\wedge\nabla_ZX\bigr)}{\tau}
+\frac{\kappa}{\tau}\Bigl(2h+\frac{Z\wedge\nabla_ZX}{\tau}\Bigr) .
\]
Off the zeros of $\tau$ the pair $(Z,X)$ is a frame; writing
$\nabla_ZX=aZ+bX$ we get $\gamma'\wedge\nabla_ZX=a\,w+b\,\kappa$ and
$Z\wedge\nabla_ZX=b\,\tau$, whence
\[
-\frac{\gamma'\wedge\nabla_ZX}{\tau}
+\frac{\kappa}{\tau}\cdot\frac{Z\wedge\nabla_ZX}{\tau}
=-\,a\,\frac{w}{\tau},
\qquad
a=\frac{\nabla_ZX\wedge X}{\tau},
\]
and, since $\tilde Z=\partial_t$ in the adapted coordinates,
$\tilde Z(m)\big|_C=\partial_t m\big|_{t=0}$; hence \eqref{eq:affine}
follows.
\end{proof}

\subsection{The rescaling law}

\begin{proposition}\label{prop:rescaling}
Let $\theta$ be a holomorphic nonvanishing function on $D\setminus\{p\}$,
meromorphic at $p$. Then
\[
\eta_{\theta Z}\;=\;\eta_Z\;-\;d\log\theta
\qquad\text{on }D\setminus\{p\},
\]
and consequently
$\Ind(\FF,\theta Z,C,p)=\Ind(\FF,Z,C,p)-\ord_p\theta$. In particular the
index is invariant under multiplication of $Z$ by a unit.
\end{proposition}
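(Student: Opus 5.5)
Two routes are available: the geometric one through Remark~\ref{rem:rescaling-h}, and a direct computation with the formula \eqref{eq:affine} of Proposition~\ref{prop:affine}. I will use the geometric route, since it avoids expanding $h_{\theta Z}$ and the wedge products by hand.

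\emph{Building the extension.} Fix a permitted extension $\tilde Z$ of $Z$ on a neighborhood of $D\setminus\{p\}$, with adapted coordinates $(s,t)$ and frame $(\tilde\gamma,\tilde Z)=(\partial_s,\partial_t)$. Let $u(s,t):=\theta(s)$, which is independent of $t$. Then $u|_C=\theta$ and $(\tilde Z u)|_C=\partial_t\theta=0$. By the last assertion of Remark~\ref{rem:rescaling-h}, the field $u\tilde Z=\theta(s)\,\partial_t$ is therefore a permitted extension of $\theta Z$. Since $\theta$ is nonvanishing on $D\setminus\{p\}$, the degeneracy set of $\theta Z$ there is empty, as it is for $Z$. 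By Proposition~\ref{prop:welldef}(b), we may compute $\eta_{\theta Z}$ using this particular extension.

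\emph{Adapted coordinates of $\theta\partial_t$.} The integral curve of $\theta(s)\partial_t$ starting at $(s,0)$ is $t=\theta(s)\hat t$. So the adapted coordinates $(s,\hat t)$ are related to the old ones by $t=\theta(s)\hat t$, which agrees with \eqref{eq:phitheta}. The new frame is obtained by the chain rule:
\[
\partial_{\hat t}=\theta\,\partial_t,\qquad \hat\partial_s=\partial_s+\theta'\hat t\,\partial_t=\partial_s+\frac{\theta'}{\theta}\,t\,\partial_t .
\]
We rewrite the generator in this frame:
\[
X=\lambda\partial_s+\mu\partial_t=\lambda\,\hat\partial_s+\Bigl(\mu-\lambda\frac{\theta'}{\theta}t\Bigr)\theta^{-1}\partial_{\hat t}.
\]
Hence the new slope is
\[
\hat m=\theta^{-1}\Bigl(m-\frac{\theta'}{\theta}\,t\Bigr).
\]

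\emph{Differentiating in $\hat t$.} Differentiate along $\hat t$ at $\hat t=0$. Note that $\partial_{\hat t}=\theta\partial_t$, and $\theta$ does not depend on $t$. This gives
\[
\partial_{\hat t}\hat m\big|_{0}=\partial_t m\big|_{0}-\frac{\theta'}{\theta}.
\]
Multiplying by $ds$ gives $\eta_{\theta Z}=\eta_Z-d\log\theta$ on $D\setminus\{p\}$. Taking residues at $p$, and using $\Res_p\,d\log\theta=\ord_p\theta$ for $\theta$ meromorphic at $p$, we obtain $\Ind(\FF,\theta Z,C,p)=\Ind(\FF,Z,C,p)-\ord_p\theta$. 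When $\theta$ is a unit, $\ord_p\theta=0$, so the index is unchanged.

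\emph{Main obstacle.} The delicate step is the frame change: one must keep the term $\theta'\hat t\,\partial_t$ in $\hat\partial_s$, because it is exactly what produces $-d\log\theta$. One must also check that the factor $\theta^{-1}$ in $\hat m$ cancels against $\partial_{\hat t}=\theta\partial_t$. As a cross-check, I would verify the result against \eqref{eq:affine} using $h_{\theta Z}=\theta h_Z$. With $Z\mapsto\theta Z$ we have $\tau\mapsto\theta\tau$, $w\mapsto\theta w$, $\kappa$ unchanged, and $\nabla_{\gamma'}(\theta Z)=\theta'Z+\theta\nabla_{\gamma'}Z$. The extra term is $-\theta'(Z\wedge X)/(\theta\tau)=-\theta'/\theta$, and all other terms are invariant, which gives the same formula.
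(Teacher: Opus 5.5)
Your proposal is correct and takes essentially the paper's route: both arguments come down to the coordinate change $t=\theta(s)\hat t$, the slope law $\hat m=(m-\theta'\hat t)/\theta$, and differentiation at $\hat t=0$. The only cosmetic difference is how you get the extension of $\theta Z$. You build it as $\theta(s)\tilde Z$ from an arbitrary permitted $\tilde Z$, using the last part of Remark~\ref{rem:rescaling-h} together with Proposition~\ref{prop:welldef}(b), whereas the paper reads the same relation directly off \eqref{eq:phitheta} for the geodesic construction.
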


\begin{proof}
By \eqref{eq:phitheta}, the adapted coordinates of $\theta Z$ are
$(s,\hat t\,)$ with $t=\theta(s)\hat t$. Along a leaf of $\FF$,
$\frac{dt}{ds}=\theta'\hat t+\theta\frac{d\hat t}{ds}$, so the two slopes
are related by $m_Z=\theta'\hat t+\theta\,m_{\theta Z}$, i.e.
$m_{\theta Z}=(m_Z-\theta'\hat t\,)/\theta$. Differentiating at
$\hat t=0$ (where $t=0$),
\[
\partial_{\hat t}m_{\theta Z}\big|_{0}
=\frac{\theta\,\partial_t m_Z|_0-\theta'}{\theta}
=\partial_t m_Z\big|_0-\frac{\theta'}{\theta}\,. \qedhere
\]
\end{proof}

\subsection{The index with respect to a morphism}\label{subsec:psi}
Let $L$ be a holomorphic line bundle on $C$ and
$\psi\colon L\to TM|_C$ a morphism of vector bundles, not identically zero,
whose image is not everywhere tangent to $C$; equivalently, writing
$\pi\colon TM|_C\to N_C$ for the natural projection, the composition
$\pi\circ\psi\colon L\to N_C$ is not identically zero. For $p\in C$ choose a local frame $e$ of $L$ near
$p$ and set $Z:=\psi(e)$, a holomorphic vector field along $C$ whose zeros
and tangencies with $C$ near $p$ are isolated. If $\FF$ is as above, we set
\[
\Ind(\FF,\psi,C,p)\;:=\;\Ind(\FF,Z,C,p);
\]
by Proposition~\ref{prop:rescaling} this does not depend on the frame $e$,
since two frames differ by a unit. When $\psi$ is the inclusion of a line
subbundle $V\subset TM|_C$ we write $\Ind(\FF,V,C,p)$. When
$\psi=(T\GG\to TM)|_C$ for a singular holomorphic foliation $\GG$ with $C$
not $\GG$-invariant, we write $\Ind(\FF,\GG,C,p)$; here $L=T\GG|_C$.

\section{The main theorem}\label{sec:main}

The following is the main result of the paper. It contains Theorem~A of the
introduction, which is the particular case in which $\psi$ is the morphism
$(T\GG\to TM)|_C$ induced by a singular holomorphic foliation $\GG$ with $C$
not $\GG$-invariant: then $L=T\GG|_C$ and $\deg L=T\GG\cdot C$.

\begin{theorem}\label{thm:main}
Let $M$ be a complex surface endowed with a projective connection and let
$C\subset M$ be a compact smooth holomorphic curve. Let
$\psi\colon L\to TM|_C$ be a morphism from a line bundle $L$ on $C$ with
$\pi\circ\psi\not\equiv0$, and let $\FF$ be a singular holomorphic foliation
defined in a neighborhood of $C\setminus\Sigma$, $\Sigma\subset C$ finite,
such that the tangency locus of $\FF$ and $\psi$ along $C$ is a finite set.
Then
$\Ind(\FF,\psi,C,p)$ is defined for every $p\in C$, vanishes outside a
finite set, and
\[
\sum_{p\in C}\Ind(\FF,\psi,C,p)\;=\deg L.
\]

\end{theorem}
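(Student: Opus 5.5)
The plan is to glue the local index densities into a global meromorphic connection-type object on $L^*$ (or equivalently on $L$), and then apply the residue theorem. For each local frame $e$ of $L$ on an open set $U\subset C$ with $Z=\psi(e)$, we have the holomorphic $1$-form $\eta_{Z}$ on $U$ minus the finitely many exceptional points. By Remark~\ref{rem:whennonzero} this form is holomorphic away from $T_Z$, $\Sigma$, and the zeros of $\tau=Z\wedge X$. The hypotheses guarantee that $T_Z$ is finite ($\pi\circ\psi\not\equiv0$), that $\Sigma$ is finite, and that the tangency locus is finite. Hence all indices are defined and vanish outside a finite set $S\subset C$. Definedness at points of $S$ is exactly Definition~\ref{def:index}, applied with $D$ a small disc isolating $p$.

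The key step is the transformation law under change of frame. If $e'=\theta e$ with $\theta\in\mathcal O^*(U\cap U')$, Proposition~\ref{prop:rescaling} gives $\eta_{\theta Z}=\eta_Z-d\log\theta$ on $(U\cap U')\setminus S$. Thus the family $\{\eta_{\psi(e_i)}\}$ defines a holomorphic connection $\nabla^L$ on $L|_{C\setminus S}$. Concretely, we declare $\nabla^L e_i=-\eta_{\psi(e_i)}\otimes e_i$ and check the cocycle condition: with $e_j=\theta e_i$,
\[
\nabla^L e_j=d\theta\, e_i+\theta\nabla^L e_i=\bigl(d\log\theta-\eta_{\psi(e_i)}\bigr)\otimes e_j=-\eta_{\psi(e_j)}\otimes e_j .
\]
The sign convention may need to be flipped depending on how one orients the residue formula. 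Near each $p\in S$ we work in the punctured disc with a frame $e$ of $L$ defined on the whole disc. Then $\Ind(\FF,\psi,C,p)=\Res_p\eta_{\psi(e)}$, by definition and by frame independence.

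To conclude, take a $C^\infty$ hermitian metric or, more simply, a meromorphic section. Choose a global meromorphic section $\sigma$ of $L$ whose zeros and poles avoid $S$; this is possible since $L$ is a line bundle on a compact curve. Write $\nabla^L\sigma=\beta\otimes\sigma$, where $\beta$ is a meromorphic $1$-form on $C$ whose poles lie in $S\cup\operatorname{div}(\sigma)$. Locally $\sigma=f e$ and $\beta=d\log f-\eta_{\psi(e)}$. So $\Res_q\beta=\ord_q\sigma$ at points of $\operatorname{div}\sigma$, where $\eta$ is holomorphic, and $\Res_p\beta=-\Ind(\FF,\psi,C,p)$ at $p\in S$, where $f$ is a unit. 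The residue theorem on the compact curve $C$ gives $\sum\Res\beta=0$, hence $\sum_p\Ind(\FF,\psi,C,p)=\deg\operatorname{div}\sigma=\deg L$.

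The only subtle point is making sure the signs in Proposition~\ref{prop:rescaling} produce $+\deg L$ rather than $-\deg L$. The computation above does this: $\beta=d\log f-\eta$ has residue $\ord f$ minus the index. I would double-check this against the invariant case, where the result should reproduce Camacho--Sad. Everything else is bookkeeping: the singularities of $\FF$ and $\psi$ are all isolated, and $\FF$ need not extend across $\Sigma$, since only punctured-disc residues enter.
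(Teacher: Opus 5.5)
Your argument is correct and is essentially the paper's proof. The paper glues the local forms $\eta_i-d\log f_i$ into a global $1$-form $\chi$, using Proposition~\ref{prop:rescaling} and a global meromorphic section $\zeta=f_ie_i$ of $L$; this $\chi$ is exactly your $-\beta$, and your connection $\nabla^L$ is a repackaging of that gluing, after which the residue theorem is applied.

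Two small corrections. At points of $\Sigma$ the form $\beta$ need not be meromorphic, only holomorphic on a punctured disc, but the residue theorem (Stokes on $C$ minus small discs) requires nothing more. Also, $\operatorname{div}\sigma$ need not avoid $S$, since $\Res_p\beta=\ord_p\sigma-\Ind(\FF,\psi,C,p)$ holds at every point.
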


Here the tangency locus of $\FF$ and $\psi$ along $C$ is the zero locus on
$C$ of the functions $(X_\FF\wedge Z_i)|_C$, where $X_\FF$ runs over local
generators of $\FF$ and $Z_i=\psi(e_i)$ over local frames; it contains
$\Sing\FF\cap C$ and the zeros of $\psi$.

We denote by
\begin{equation}\label{eq:S}
S\;:=\;T_\psi\;\cup\;\Sigma\;\cup\;\bigl(\TTang(\FF,\psi)\cap C\bigr)
\end{equation}
the union of the three kinds of points listed in
Remark~\ref{rem:whennonzero}, where $T_\psi$ denotes the zero locus of
$\pi\circ\psi$, that is, the set of tangencies and zeros of $\psi$; it is
finite, being the zero locus of a section of $\mathrm{Hom}(L,N_C)$ which is
not identically zero. The tangency locus of
$\FF$ and $\psi$ is finite by hypothesis, and $\Sigma$ is finite; hence $S$
is finite.

\begin{proof}[Proof of Theorem~\ref{thm:main}]
By Remark~\ref{rem:whennonzero}, $\Ind(\FF,\psi,C,p)$ is defined for every
$p\in C$ and vanishes outside the finite set $S$.
Cover $C$ by finitely many discs $V_i$ with frames $e_i$ of $L$, and let
$\theta_{ij}$ be the transitions: $e_i=\theta_{ij}e_j$, hence
$Z_i=\theta_{ij}Z_j$ with $\theta_{ij}$ holomorphic and nonvanishing. By
Proposition~\ref{prop:rescaling},
\begin{equation}\label{eq:cocycle}
\eta_i\;=\;\eta_j\;-\;d\log\theta_{ij}
\qquad\text{on }(V_i\cap V_j)\setminus S .
\end{equation}
Choose a global meromorphic section $\zeta$ of $L$, $\zeta=f_ie_i$
with $f_i$ meromorphic; from $e_i=\theta_{ij}e_j$ one gets
$f_i/f_j=\theta_{ij}^{-1}$, hence
$d\log f_i-d\log f_j=-\,d\log\theta_{ij}$. By \eqref{eq:cocycle} the local
$1$-forms
\[
\chi\big|_{V_i}\;:=\;\eta_i\;-\;d\log f_i
\]
agree on overlaps and define a global $1$-form $\chi$ on $C$, holomorphic
outside the finite set $S\cup|\operatorname{div}\zeta|$. At every $p\in C$,
\[
\Res_p\chi\;=\;\Ind(\FF,\psi,C,p)\;-\;\ord_p\zeta.
\]
 Therefore, using  the residue theorem,
\[
\sum_p\Ind(\FF,\psi,C,p)=\sum_p\Res_p\chi + \sum_p \ord_p\zeta=0+ \deg L . \qedhere
\]
\end{proof}

\subsection{The invariant case}
Recall the index of \cite{CS} for an invariant curve: if $C=\{y=0\}$ in local coordinates
is invariant by $\FF$, and the leaves of $\FF$ satisfy $dy/dx=m(x,y)$ --- with
$m(x,0)\equiv0$, then
\[
\CS(\FF,C,p)\;:=\;\Res_{p}\;\partial_ym(x,0)\,dx .
\]
It is well known that this index does not depend on the choice
of such coordinates.

\begin{proposition}\label{prop:invariant}
Assume $C$ is $\FF$-invariant. Then for every $p\in C$,
\[
\Ind(\FF,\psi,C,p)\;=\;\CS(\FF,C,p)\;-\;\ord_p(\pi\circ\psi) .
\]In particular the index does not depend on the projective connection.
\end{proposition}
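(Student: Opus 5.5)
The plan is to compute the index density $\eta_Z$ directly in adapted coordinates. Invariance of $C$ makes the slope $m$ vanish identically on $\{t=0\}$, so only its first $t$-derivative enters, and we can compare it with the Camacho--Sad density.

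Work near $p$ with a local frame $e$ of $L$ and $Z=\psi(e)$. By Proposition~\ref{prop:rescaling}, first treat the case where $Z$ is transverse to $C$ on a punctured disc, so $p$ may be the only degeneracy point. Take the permitted extension $\tilde Z$ of Lemma~\ref{lem:extension}(ii) and its adapted coordinates $(s,t)$, defined near $(D\setminus\{p\})\times\{0\}$. In these coordinates $C=\{t=0\}$ is $\FF$-invariant, so the slope satisfies $m(s,0)\equiv0$. Consequently the function $\partial_t m(s,0)$ is exactly the Camacho--Sad density computed in the coordinates $(s,t)$.

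The key point is that these coordinates do not extend across $p$, so we cannot quote coordinate-independence of $\CS$ directly. Instead, first I compare first-order data.

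1. Choose honest holomorphic coordinates $(x,y)$ near $p$ with $C=\{y=0\}$.
2. On the punctured neighborhood, write the change of coordinates as
\[
x=s+O(t),\qquad y=t\,g(s)+O(t^2),
\]
where $g(s)$ is the $\partial_y$-component of $Z$ modulo $TC$. Equivalently, $g$ represents $\pi\circ\psi(e)$ in the trivialization of $N_C$ given by $\partial_y$, so $\ord_p g=\ord_p(\pi\circ\psi)$ and $g$ is meromorphic (in fact holomorphic) at $p$.
3. For an invariant curve, a change of transverse coordinate $y=t\,g(s)+O(t^2)$ changes the linear part of the slope by the rule already used in Proposition~\ref{prop:rescaling}. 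Since $m$ vanishes on $C$, the second-order terms $O(t^2)$ and the $O(t)$ correction in $x$ contribute only terms that vanish on $C$.
4. This gives
\[
\partial_t m_{(s,t)}\big|_{t=0}\,ds=\partial_y m_{(x,y)}\big|_{y=0}\,dx-d\log g .
\]

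Taking residues at $p$ yields $\Ind=\CS-\ord_p g=\CS-\ord_p(\pi\circ\psi)$.

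The identity in step 4 contains no Christoffel symbols and no $h$, so it is independent of $\nabla$. This gives the last assertion of the statement.

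The main obstacle is justifying step 3 carefully: the $O(t^2)$ terms of the coordinate change, which is where the projective connection and $h$ enter, must drop out of the first-order derivative along $C$. I would check this directly with formula \eqref{eq:affine}. Since $C$ is invariant, $\kappa=\gamma'\wedge X\equiv0$. Writing $\nabla_{\gamma'}Z$ and $\nabla_Z X$ in the frame $(\gamma',Z)$, the terms involving the Christoffel symbols should cancel between $-(\nabla_{\gamma'}Z\wedge X)/\tau$ and $-(w/\tau)(\nabla_ZX\wedge X)/\tau$, leaving $\partial_y m\,dx-d\log g$ in the coordinates $(x,y)$. As a cross-check, when $Z=\partial_y$ in a chart whose representative connection is flat, the formula visibly gives $\CS$.
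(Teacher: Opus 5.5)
Your argument is correct, and it takes a genuinely different route from the paper's.

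Step~3, which you flag as the main obstacle, closes by the chain rule alone and needs no Christoffel-symbol check. Write $x=s+O(t)$, $y=tg(s)+O(t^2)$, $m=tm_1(s)+O(t^2)$, and $M(x,y)=yM_1(x)+O(y^2)$ for the slope in $(x,y)$. Then on the one hand $M=(y_s+y_tm)/(x_s+x_tm)=t\,(g'+gm_1)+O(t^2)$, and on the other $M=t\,g\,M_1(s)+O(t^2)$. Hence $m_1=M_1-g'/g$ pointwise on $D\setminus\{p\}$. The second-order terms of $y$ and the first-order terms of $x$ enter only at order $t^2$, because $m$ vanishes at $t=0$. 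Your preliminary reduction via Proposition~\ref{prop:rescaling} is vacuous, since the standing hypotheses already make $p$ the only degeneracy point in $D$.

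This computation uses only $\phi(s,0)=\gamma(s)$ and $\partial_t\phi(s,0)=Z(s)$, so it proves slightly more than needed. For an invariant curve the density does not depend on the extension of $Z$ at all, permitted or not. That is the conceptual reason the projective connection drops out: it only affects the $2$-jet in $t$.

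The paper instead stays inside formula~\eqref{eq:affine}. With $\kappa\equiv0$ it proves the tangential invariance $\eta_{Z+\theta\gamma'}=\eta_Z$ (the two $\Theta$-terms cancel because $\tau=-uw$). It then strips the factor $\theta_1$ with the rescaling law. This reduces to a transversal $Z_0$ whose adapted coordinates are honest coordinates at $p$, so the classical coordinate-invariance of $\CS$ can be quoted.

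The paper's route reuses its general machinery and never has to handle a coordinate change that degenerates at $p$. Yours is more elementary and self-contained: it is essentially the classical proof of invariance of $\CS$, carried out for a change of coordinates whose Jacobian degenerates at $p$. That degeneration is exactly what produces the correction $-d\log g$.
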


\begin{proof}
Let $\gamma$ parametrize $C$ near $p$, let $X$ be a local generator of
$\FF$, let $e$ be a local frame of $L$ around $p$ and put $Z=\psi(e)$, so
that $\Ind(\FF,\psi,C,p)=\Ind(\FF,Z,C,p)$.

Since $C$ is $\FF$-invariant, $X$ is tangent to $C$ along $C$, that is,
\[
X\big|_C=u\,\gamma'
\]
for some holomorphic $u$, nonvanishing on the punctured disc: there
$\tau=Z\wedge X=-u\,(\gamma'\wedge Z)=-u\,w$, and both $\tau$ and $w$ are
nonvanishing. Moreover, $\kappa=\gamma'\wedge X$ vanishes identically
along $C$, so that the term $2h\kappa/\tau$ of \eqref{eq:affine} drops out
and Proposition~\ref{prop:affine} reads
\begin{equation}\label{eq:invexpr}
\eta_Z\;=\;\left[\,
-\,\frac{\nabla_{\gamma'}Z\wedge X}{\tau}
\;-\;\frac{w}{\tau}\cdot\frac{\nabla_ZX\wedge X}{\tau}
\,\right]ds ,
\end{equation}
all terms being evaluated along $\gamma$.

Let $Z_0$ be a holomorphic vector field along $C$ near $p$ such that
$\bigl(Z_0,\gamma'\bigr)$ is a frame along $C$; in particular $Z_0$ is
transversal to $C$. Then the
reference decomposes as
\[
Z\;=\;\theta_1\,Z_0\;+\;\theta_2\,\gamma',
\]
where $\theta_1$ and $\theta_2$ are holomorphic and
$$\ord_p\theta_1=\ord_p(\gamma'\wedge Z)=\ord_p(\pi\circ\psi).$$ We use two invariance
properties of \eqref{eq:invexpr}.

\emph{(a) Tangential invariance:} for $\theta$ holomorphic,
$\eta_{Z+\theta\gamma'}=\eta_Z$. Indeed the substitution
$Z\mapsto Z+\theta\gamma'$ leaves $w$ unchanged, and leaves $\tau$
unchanged because the extra term is $\theta\kappa=0$; while
\begin{align*}
\nabla_{\gamma'}(Z+\theta\gamma')\wedge X
&=\nabla_{\gamma'}Z\wedge X
+\theta'\,\underbrace{(\gamma'\wedge X)}_{=\,0}
+\theta\,(\nabla_{\gamma'}\gamma'\wedge X),\\
\nabla_{Z+\theta\gamma'}X\wedge X
&=\nabla_ZX\wedge X+\theta\,(\nabla_{\gamma'}X\wedge X).
\end{align*}
Setting  $\Theta:=\nabla_{\gamma'}\gamma'\wedge\gamma'$, from $X|_C=u\gamma'$ one gets
$\nabla_{\gamma'}\gamma'\wedge X=u\,\Theta$ and
$\nabla_{\gamma'}X\wedge X=(u'\gamma'+u\nabla_{\gamma'}\gamma')\wedge
u\gamma'=u^2\,\Theta$, and the
two extra contributions to \eqref{eq:invexpr},
\[
-\,\frac{\theta\,u\,\Theta}{\tau}
\qquad\text{and}\qquad
-\,\frac{w}{\tau}\cdot\frac{\theta\,u^{2}\Theta}{\tau}
=+\,\frac{\theta\,u\,\Theta}{\tau}
\qquad\bigl(\tau=-uw\bigr),
\]
cancel.

\emph{(b) Rescaling:} $\eta_{\theta_1Z_0}=\eta_{Z_0}-d\log\theta_1$ on the
punctured disc, by Proposition~\ref{prop:rescaling}.

Combining,
\[
\eta_Z
\overset{(a)}{=}\eta_{\theta_1Z_0}
\overset{(b)}{=}\eta_{Z_0}-d\log\theta_1 ,
\]
and taking residues at $p$,
\[
\Ind(\FF,\psi,C,p)=\Ind(\FF,Z,C,p)
=\Ind(\FF,Z_0,C,p)-\ord_p(\pi\circ\psi) .
\]

It remains to identify $\Ind(\FF,Z_0,C,p)$ with the Camacho--Sad index.
Since $Z_0$ is transversal to $C$, its degeneracy set is empty near $p$, so
by Lemma~\ref{lem:extension}(ii) the map $\phi$ of \eqref{eq:phi} is a
biholomorphism onto a neighborhood of $p$ in $M$; that is, the adapted
coordinates $(s,t)$ are holomorphic coordinates near $p$ in which
$C=\{t=0\}$ and $\gamma(s)=(s,0)$. In them the leaves of $\FF$ satisfy
$dt/ds=m$, with $m(s,0)\equiv0$ because $C$ is invariant, so that $m$ is
the slope field of $\FF$ in the sense recalled above and
$\eta_{Z_0}=\partial_tm\big|_{t=0}\,ds$ is the Camacho--Sad density
computed in the coordinates $(s,t)$. Therefore
\[
\Ind(\FF,Z_0,C,p)\;=\;\CS(\FF,C,p),
\]
and the proposition follows.
\end{proof}

\begin{corollary}[\cite{CS}]\label{cor:CS}
If $C$ is compact, smooth and $\FF$-invariant, then
$\sum_{p}\CS(\FF,C,p)=C\cdot C$.
\end{corollary}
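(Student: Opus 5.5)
The plan is to derive the corollary directly from Theorem~\ref{thm:main} combined with Proposition~\ref{prop:invariant}. The only extra ingredient is an auxiliary morphism $\psi\colon L\to TM|_C$ whose tangency data can be counted. The natural choice is $L=TM|_C$ restricted to a suitable line subbundle, but there is a simpler option. Take any line bundle $L$ together with a morphism $\psi\colon L\to TM|_C$ with $\pi\circ\psi\not\equiv0$. For instance, let $L$ be $N_C\otimes\mathcal O(-D)$ for an effective divisor $D$ of large degree, chosen so that $\mathrm{Hom}(L,TM|_C)=TM|_C\otimes L^{*}$ has a global section whose projection to $\mathrm{Hom}(L,N_C)=\mathcal O(D)$ is not identically zero. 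Such a section exists by Serre vanishing: $H^1(C,N_C^{*}\otimes TC\otimes\mathcal O(D))=0$ for $\deg D\gg0$, so the sequence $0\to TC\otimes L^{*}\to TM|_C\otimes L^{*}\to N_C\otimes L^{*}\to0$ is surjective on global sections, and $H^0(\mathcal O(D))\ne0$. Here we also need a projective connection on a neighborhood of $C$. Locally any torsion-free connection works, but globally the statement presupposes the ambient $M$ of the section. In that setting the corollary is stated with the projective connection available; alternatively, Proposition~\ref{prop:invariant} shows the index is independent of the connection.

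With $\psi$ fixed, we check the hypotheses of Theorem~\ref{thm:main} with $\Sigma=\emptyset$. The tangency locus of $\FF$ and $\psi$ along $C$ is the zero set of $X\wedge Z$ with $X|_C=u\gamma'$. It equals the zeros of $u\,w$, which lie in $(\Sing\FF\cap C)\cup T_\psi$ and are therefore finite. Theorem~\ref{thm:main} then gives $\sum_p\Ind(\FF,\psi,C,p)=\deg L$.

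Next we substitute Proposition~\ref{prop:invariant} pointwise, which gives
\[
\sum_p\CS(\FF,C,p)=\deg L+\sum_p\ord_p(\pi\circ\psi).
\]
Since $\pi\circ\psi$ is a nonzero holomorphic section of $\mathrm{Hom}(L,N_C)=N_C\otimes L^{*}$, the sum of its orders equals $\deg N_C-\deg L=C\cdot C-\deg L$. The total is therefore $C\cdot C$.

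The main obstacle is the existence of a global $\psi$ with $\pi\circ\psi\not\equiv0$, which is handled by the twisting argument above. A quicker route, when a foliation $\GG$ near $C$ with $C$ not invariant is available, is to take $\psi=(T\GG\to TM)|_C$ and use that the tangency orders sum to $C\cdot C-T\GG\cdot C$. The bundle-theoretic choice avoids needing such a $\GG$.
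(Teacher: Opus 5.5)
Your proof is correct and follows the paper's argument: choose $\psi\colon L\to TM|_C$ with $\pi\circ\psi\not\equiv0$, apply Proposition~\ref{prop:invariant} at each point and Theorem~\ref{thm:main} globally, and use $\sum_p\ord_p(\pi\circ\psi)=C\cdot C-\deg L$. Your Serre-vanishing construction of $\psi$ (with $L=N_C\otimes\mathcal O(-D)$) and your check that the tangency locus is finite fill in two steps that the paper simply asserts.
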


\begin{proof}
We can find a line bundle $L$ over $C$ with a  morphism $\psi\colon L\to TM|_C$ such that $\pi\circ\psi\colon L\to  N_C$ is not 
identically zero. Then 
$\sum_p\ord_p(\pi\circ\psi)=\deg N_C-\deg L=C\cdot C-\deg L$. Therefore, by
Proposition~\ref{prop:invariant} and Theorem~\ref{thm:main},
\begin{align*}
\sum_p\CS(\FF,C,p)
&=\sum_p\Ind(\FF,\psi,C,p)+\sum_p\ord_p(\pi\circ\psi)\\
&=\deg L+\bigl(C\cdot C-\deg L\bigr)\;=\;C\cdot C. \qedhere
\end{align*}
\end{proof}

\section{Local computations}\label{sec:local}

Throughout this section the standing hypotheses of
Definition~\ref{def:index} are in force, and $\nabla$ denotes a local
representative of the projective connection near the point under
consideration. We
compute the index at the various kinds of points of the set $S$ of
\eqref{eq:S}, under natural nondegeneracy assumptions.

\subsection{The contact invariant}

Let $V\subset TM|_C$ be a line subbundle tangent to $C$ at $p$, that is,
with $V_p=T_pC$. For $v\in T_pC$ choose a section $X$ of $V$ near $p$ with
$X(p)=v$ and set
\[
q_V(v)\;:=\;\nabla_vX\ \ \mathrm{mod}\ T_pC\;\in\;N_p ,
\]
the derivative being that of a section of $TM|_C$ along $C$.

This does not depend on the choice of $X$: if $v\neq0$ then $X$ generates
$V$ near $p$, any other such section is $uX$ with $u(p)=1$, and
$\nabla_v(uX)=v(u)\,X(p)+\nabla_vX$, whose extra term $v(u)\,v$ lies in
$T_pC$; if $v=0$ both derivatives vanish. Nor does it depend on the local
representative: if $\nabla'$ is projectively equivalent to $\nabla$ then
$\nabla'_vX=\nabla_vX+\xi(v)X(p)+\xi(X(p))\,v=\nabla_vX+2\,\xi(v)\,v$, again
a change lying in $T_pC$. 

Finally $cX$ is a section of $V$ with value $cv$ at $p$ and
$\nabla_{cv}(cX)=c^2\,\nabla_vX$, so that $q_V(cv)=c^2q_V(v)$. A map between
one-dimensional spaces which is homogeneous of degree two is the restriction
to the diagonal of a unique symmetric bilinear map; there is therefore a
unique element $\sigma_V(p)\in N_p\otimes(T_p^*C)^{\otimes2}$ such that
\[
\sigma_V(p)(v,v)\;=\;q_V(v)
\qquad\text{for all } v\in T_pC .
\]
Thus $\sigma_V(p)$ is attached to $V$, $C$ and $p$ alone; no parametrization
intervenes. For $V=TC$ it is the value at $p$ of the projective second
fundamental form of Lemma~\ref{lem:sff}: $\sigma_{TC}(p)=\sigma_C(p)$.

A parametrization $\gamma$ of $C$ near $p$ determines an isomorphism of
one-dimensional vector spaces
\begin{equation}\label{eq:Phi}
\Phi\colon N_p\otimes(T_p^*C)^{\otimes2}\longrightarrow\Lambda^2T_pM,
\qquad
\Phi(\sigma)\;=\;\gamma'(p)\wedge\sigma\bigl(\gamma'(p),\gamma'(p)\bigr).
\end{equation}
We keep it explicit wherever it is applied. Since $\Phi$ is a linear
isomorphism between one-dimensional spaces,
\begin{equation}\label{eq:Phiratio}
\frac{\Phi(a)}{\Phi(b)}\;=\;\frac{a}{b}
\qquad\text{for } b\neq0,
\end{equation}
so that a ratio may be read indifferently on either side. 
\begin{definition}\label{def:k}
Let $V\subset TM|_C$ be a line subbundle tangent to $C$ at $p$, with
isolated tangency. The \emph{contact invariant} of $V$ and $C$ at $p$ is
\[
k(V,C,p)\;:=\;\frac{\sigma_V(p)}{\sigma_C(p)}\;\in\;\PP^1,
\]
defined whenever $\bigl(\sigma_V(p),\sigma_C(p)\bigr)\neq(0,0)$. The
quotient is a number because $\sigma_V(p)$ and $\sigma_C(p)$ lie in the same
one-dimensional space $N_p\otimes(T_p^*C)^{\otimes2}$.
\end{definition}

Let $Z$ be a generator of $V$ near $p$, normalized so that
$Z(p)=\gamma'(p)$. The tangency function $w=\gamma'\wedge Z$ is a
holomorphic section of the line bundle $\Lambda^{2}TM$ along $C$, and its
derivative $w':=\nabla_{\gamma'}w$ is taken with the connection induced on
$\Lambda^{2}TM$ by a local representative $\nabla$. Any two connections on
a line bundle differ by a $1$-form, say
$\nabla'_{v}s=\nabla_{v}s+\beta(v)\,s$; at a zero of $s$ the two
derivatives therefore agree. Since $Z(p)=\gamma'(p)$ gives $w(p)=0$, the
value
\[
w'(p)\;\in\;\Lambda^{2}T_pM
\]
is well defined, independently of the representative of the projective
connection.

\begin{lemma}\label{lem:simple}
With $\Phi$ as in \eqref{eq:Phi},
\[
w'(p)\;=\;\Phi\bigl(\sigma_V(p)-\sigma_C(p)\bigr).
\]
In particular $w$ has a simple zero at $p$ --- we then say that the tangency
of $V$ with $C$ at $p$ is \emph{simple} --- if and only if $k(V,C,p)\neq1$.
\end{lemma}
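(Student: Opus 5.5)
The identity is local at $p$ and pointwise, so I will compute $w'(p)=\nabla_{\gamma'}(\gamma'\wedge Z)$ at $p$ directly, using the Leibniz rule for the connection induced on $\Lambda^2TM$:
\[
\nabla_{\gamma'}(\gamma'\wedge Z)\;=\;\nabla_{\gamma'}\gamma'\wedge Z\;+\;\gamma'\wedge\nabla_{\gamma'}Z .
\]
Since $w(p)=0$, the result does not depend on the representative $\nabla$, as already noted before the statement. So I am free to fix any local representative and compute with it.

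At $p$ I will use the normalization $Z(p)=\gamma'(p)$. The first term then becomes $\nabla_{\gamma'}\gamma'\wedge\gamma'$ evaluated at $p$. Wedging with $\gamma'(p)$ kills every component along $T_pC$, so only the normal class of $\nabla_{\gamma'}\gamma'$ survives. That class is $\sigma_C(p)(\gamma'(p),\gamma'(p))$ by Lemma~\ref{lem:sff}, where $\sigma$ is computed from the section $\gamma'$ of $TC$. This gives
\[
\nabla_{\gamma'}\gamma'\wedge\gamma'\;=\;-\,\gamma'(p)\wedge\sigma_C(p)(\gamma'(p),\gamma'(p))\;=\;-\,\Phi(\sigma_C(p)).
\]
For the second term, $Z$ is a section of $V$ with $Z(p)=v:=\gamma'(p)$. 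By the definition of $q_V$, the normal class of $\nabla_v Z$ is $q_V(v)=\sigma_V(p)(v,v)$, and again only that class matters after wedging with $\gamma'(p)$. Hence
\[
\gamma'\wedge\nabla_{\gamma'}Z\;=\;\Phi(\sigma_V(p)).
\]
Adding the two terms yields $w'(p)=\Phi\bigl(\sigma_V(p)-\sigma_C(p)\bigr)$.

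For the ``in particular'' statement, note that $w$ is a section of a line bundle vanishing at $p$. It therefore has a simple zero exactly when $w'(p)\neq0$, which, since $\Phi$ is an isomorphism, is equivalent to $\sigma_V(p)\neq\sigma_C(p)$. When $\sigma_C(p)\neq0$ this is $k\neq1$ by \eqref{eq:Phiratio}. When $\sigma_C(p)=0$, we have $k=\infty\neq1$ precisely when $\sigma_V(p)\neq0$, which is the same condition. If both forms vanish, then $k$ is undefined and $w'(p)=0$, which is consistent with the convention in Definition~\ref{def:k}.

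The only delicate point is the sign and orientation bookkeeping in the first term, namely that $\nabla_{\gamma'}\gamma'\wedge\gamma'=-\gamma'\wedge\nabla_{\gamma'}\gamma'$. I will check that this sign produces the difference $\sigma_V-\sigma_C$ rather than a sum. Everything else is the Leibniz rule together with the observation that wedging with $\gamma'(p)$ factors through the quotient $T_pM\to N_p$.
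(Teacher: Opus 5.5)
Your proof is correct and follows the paper's argument essentially verbatim: the Leibniz rule $w'=\nabla_{\gamma'}\gamma'\wedge Z+\gamma'\wedge\nabla_{\gamma'}Z$, the normalization $Z(p)=\gamma'(p)$, and the identification of the two summands with $-\Phi(\sigma_C(p))$ and $\Phi(\sigma_V(p))$. Your case analysis for the ``in particular'' is slightly more careful than the paper's one-line conclusion: it treats $\sigma_C(p)=0$, where $k=\infty$, and the case where both forms vanish and $k$ is undefined, but the route is the same.
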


\begin{proof}
$w'=\nabla_{\gamma'}\gamma'\wedge Z+\gamma'\wedge\nabla_{\gamma'}Z$. At $p$,
where $Z(p)=\gamma'(p)$, the first summand is
\[
\bigl(\nabla_{\gamma'}\gamma'\bigr)(p)\wedge\gamma'(p)
=-\,\gamma'(p)\wedge\bigl(\nabla_{\gamma'}\gamma'\bigr)(p)
=-\,\Phi\bigl(\sigma_C(p)\bigr),
\]
and the second is
\[
\gamma'(p)\wedge\bigl(\nabla_{\gamma'}Z\bigr)(p)=\Phi\bigl(\sigma_V(p)\bigr).
\]
Adding the two equalities and using the
linearity of $\Phi$ gives the formula. Finally, $\Phi$ is an isomorphism,
so $w'(p)\neq0$ if and only if $\sigma_V(p)\neq\sigma_C(p)$, that is, if and
only if $k(V,C,p)\neq1$.
\end{proof}

\begin{remark}\label{rem:kvalues}
(a) When $V=T\GG|_C$ comes from a foliation $\GG$ tangent to $C$ at $p$ we
write $k(\GG,C,p):=k(T\GG|_C,C,p)$; since the leaf $\mathcal L$ of $\GG$
through $p$ is tangent to $V$ at $p$ to first order,
$\sigma_V(p)=\sigma_{\mathcal L}(p)$ and
\[
k(\GG,C,p)\;=\;\frac{\sigma_{\mathcal L}(p)}{\sigma_C(p)}\,.
\]

(b) This is a genuine \emph{ratio of curvatures}. The invariance arguments
of Lemma~\ref{lem:sff} and of the definition of $\sigma_V$ use only that the
extra term $\xi(X)Y+\xi(Y)X$ lies in $TC$, so they apply verbatim when
$\xi$ is merely a smooth $(1,0)$-form; $\sigma$ and $k$ may therefore be
computed with a smooth connection differing from a representative by such a
term. On $\PP^2$ the Chern connection of the Fubini--Study metric, whose
Christoffel symbols are
$\Gamma^k_{ij}=-(\delta^k_i\bar z_j+\delta^k_j\bar z_i)/(1+|z|^2)$, is of
this form relative to the flat representative of an affine chart, and
$k(\GG,C,p)$ becomes the quotient of the curvature vectors of the leaf and
of the curve at $p$, measured in the Fubini--Study metric. The individual
curvatures are not projective invariants; their ratio is.

(c) $k=0$ if and only if $\sigma_V(p)=0$, that is, if and only if $V$ has
contact of order $\ge3$ with its tangent geodesic at $p$; $k=\infty$ if and
only if $\sigma_C(p)=0$, i.e.\ if and only if $p$ is an inflection point of
$C$. We use the convention $\tfrac1{1-\infty}=0$.

(d) In coordinates in which the geodesic tangent to $C$ at $p$ is the
$x$-axis --- in the flat case, in affine coordinates of a distinguished
chart with $T_pC=\{y=0\}$ --- write $C$ as the graph of $y=f(x)$ and
$V=\CC\cdot(1,\rho(x))$ along $C$, with $f(0)=f'(0)=\rho(0)=0$. Then
$\sigma_C(p)$ and $\sigma_V(p)$ are $f''(0)$ and $\rho'(0)$ times
$dx^{\otimes2}\otimes\partial_y$, so
\[
k(V,C,p)=\frac{\rho'(0)}{f''(0)}\,.
\]
\end{remark}

\subsection{Tangencies of the reference with the curve}

\begin{proposition}\label{prop:case1}
Let $\psi$ be nonvanishing at $p$, with $V:=\operatorname{im}\psi$ tangent
to $C$ at $p$ with a simple tangency, and let $\FF$ be holomorphic at $p$
and transversal to $C$ at $p$. Then
\[
\Ind(\FF,\psi,C,p)\;=\;\frac{2\,k}{1-k}\,,\qquad k=k(V,C,p).
\]
In particular the index does not depend on $\FF$.
\end{proposition}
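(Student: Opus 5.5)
The plan is to compute the residue at $p$ of the explicit expression \eqref{eq:affine} for $\eta_Z$ given by Proposition~\ref{prop:affine}. I will show that only the term $2h\kappa/\tau$ has a pole, and that this pole is simple with a residue readable from Lemma~\ref{lem:simple}.

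First I normalize. Let $e$ be a local frame of $L$ and $Z=\psi(e)$. Since $\psi$ is nonvanishing at $p$ and $V$ is tangent to $C$ there, $Z(p)$ is a nonzero multiple of $\gamma'(p)$. Multiplying $e$ by a unit, which is allowed by Proposition~\ref{prop:rescaling} or simply by the definition in \S\ref{subsec:psi}, I may assume $Z(p)=\gamma'(p)$. Let $X$ be a local generator of $\FF$ at $p$, nonvanishing and transversal to $C$. Then $\kappa(p)=\gamma'(p)\wedge X(p)\neq0$ and $\tau(p)=Z(p)\wedge X(p)=\kappa(p)\neq0$. Hence $\tau$ is a unit near $p$, so the terms $(\nabla_{\gamma'}Z\wedge X)/\tau$ and $\frac{w}{\tau}\cdot\frac{\nabla_ZX\wedge X}{\tau}$ of \eqref{eq:affine} are holomorphic at $p$ and contribute no residue. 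We are left with
\[
\Ind(\FF,\psi,C,p)=\Res_p\frac{2h\kappa}{\tau}\,ds .
\]

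Next I analyse $h=(Z\wedge\nabla_{\gamma'}Z)/(Z\wedge\gamma')=-(Z\wedge\nabla_{\gamma'}Z)/w$. By Lemma~\ref{lem:simple} and the simplicity hypothesis $k\neq1$, $w$ has a simple zero at $p$ with $w'(p)=\Phi(\sigma_V(p)-\sigma_C(p))$. To compute residues I trivialize $\Lambda^2TM$ near $p$. Because $w(p)=0$, the derivative of $w$ at $p$ in any trivialization agrees with $w'(p)$, as noted before Lemma~\ref{lem:simple}. The numerator at $p$ is $\gamma'(p)\wedge(\nabla_{\gamma'}Z)(p)=\Phi(\sigma_V(p))$, by the same computation as in the proof of Lemma~\ref{lem:simple}. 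Since $\kappa/\tau$ is holomorphic with value $1$ at $p$, the integrand has a simple pole with
\[
\Res_p\frac{2h\kappa}{\tau}\,ds=2\Res_p h\,ds=-2\,\frac{\Phi(\sigma_V(p))}{\Phi(\sigma_V(p)-\sigma_C(p))}=-\frac{2k}{k-1}=\frac{2k}{1-k},
\]
using \eqref{eq:Phiratio}.

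Two points need care. The first is the case $\sigma_C(p)=0$, i.e.\ $k=\infty$. There the quotient is $-2$, and this matches $\lim_{k\to\infty}2k/(1-k)$, so the formula should be read in $\PP^1$ accordingly. The second, which I expect to be the main (mild) obstacle, is bookkeeping. One must check that the quotient of sections of $\Lambda^2TM$ defining $h$ is a genuine meromorphic function, and that its residue in the parameter $s$ is (numerator at $p$)$/w'(p)$ with $w'=\nabla_{\gamma'}w$, independently of the chosen trivialization. This is precisely the vanishing-at-a-zero remark preceding Lemma~\ref{lem:simple}. Independence of $\FF$ is then visible, since $\FF$ entered only through $\kappa(p)/\tau(p)=1$.
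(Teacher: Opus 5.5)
Your proposal is correct and follows essentially the same route as the paper. You normalize $Z(p)=\gamma'(p)$, use $\tau(p)=\kappa(p)\neq0$ to discard every term of \eqref{eq:affine} except $2h\kappa/\tau$, and read off the residue $-2\,\Phi(\sigma_V(p))/w'(p)$ via Lemma~\ref{lem:simple} and \eqref{eq:Phiratio}; your remarks on trivializing $\Lambda^2TM$ and on the value $-2$ at $k=\infty$ only make explicit what the paper leaves implicit.
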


\begin{proof}
Since $Z=\psi(e)$ is nonzero and tangent to $C$ at $p$, after multiplying
the frame $e$ by a constant we may assume $Z(p)=\gamma'(p)$; by
Proposition~\ref{prop:rescaling} this does not change the index. Let $X$ be
a generator of $\FF$ near $p$. As $\FF$ is transversal to $C$ at $p$ we have
$\kappa(p)=(\gamma'\wedge X)(p)\neq0$, and $\tau(p)=(Z\wedge X)(p)=\kappa(p)$
is nonzero as well.

In \eqref{eq:affine} the term $(\nabla_{\gamma'}Z\wedge X)/\tau$ and the
term $(w/\tau)\,(\nabla_ZX\wedge X)/\tau$ are therefore holomorphic at $p$. Hence
\[
\Ind(\FF,\psi,C,p)\;=\;\Res_p\frac{2h\,\kappa}{\tau}\,ds,
\qquad
h=-\,\frac{Z\wedge\nabla_{\gamma'}Z}{w}\,.
\]
At $p$ we have $(\kappa/\tau)(p)=1$ and, since $Z(p)=\gamma'(p)$,
\[
\bigl(Z\wedge\nabla_{\gamma'}Z\bigr)(p)
=\bigl(\gamma'\wedge\nabla_{\gamma'}Z\bigr)(p)=\Phi\bigl(\sigma_V(p)\bigr),
\]
while $w'(p)=\Phi\bigl(\sigma_V(p)-\sigma_C(p)\bigr)\neq0$ by
Lemma~\ref{lem:simple}. As $w$ has a simple zero, this and
\eqref{eq:Phiratio} give
\[
\Ind(\FF,\psi,C,p)
=-\,\frac{2\,\Phi\bigl(\sigma_V(p)\bigr)}{w'(p)}
=-\,\frac{2\,\sigma_V(p)}{\sigma_V(p)-\sigma_C(p)}
=\frac{2k}{1-k}\,.
\]
The generator $X$ has disappeared from the computation. The formula also
covers $k=\infty$ ($\sigma_C(p)=0$), giving the value $-2$.
\end{proof}

\subsection{Geodesic foliations of a tangent line field}

A holomorphic foliation is \emph{geodesic} if its leaves are geodesics of
the projective connection. The following lemma attaches such a foliation,
defined near $C$, to a line subbundle $V\subset TM|_C$.

\begin{lemma}\label{lem:geodfol}
Let $V\subset TM|_C$ be a line subbundle with isolated tangencies
$T_V\subset C$. The geodesics tangent to $V$ along $C$ are the leaves of a
holomorphic foliation $\FF_V$, the \emph{geodesic foliation generated by
$V$}, defined on a neighborhood of $C\setminus T_V$; one has
$T\FF_V|_{C\setminus T_V}=V$, and $\FF_V$ is generated by the permitted
extension $\tilde Z_V$ of any local generator $Z_V$ of $V$.
\end{lemma}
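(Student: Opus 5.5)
Locally near a point of $C\setminus T_V$, choose a generator $Z_V$ of $V$ and apply Lemma~\ref{lem:extension}(ii) with $Z=Z_V$. This produces the biholomorphism $\phi(s,t)=c_s(u(s,t))$ onto a neighborhood of $\gamma(U_Z)$. Its curves $s=\mathrm{const}$ are reparametrizations of the geodesics $c_s$, which are tangent to $V$ at $\gamma(s)$. Hence the vertical foliation $\{s=\mathrm{const}\}$ in these coordinates is a regular holomorphic foliation whose leaves are exactly the geodesics through points of $C$ tangent to $V$. It is generated by $\tilde Z_V=\phi_*\partial_t$, and along $C$ its tangent line is spanned by $\tilde Z_V|_C=Z_V$, so $T\FF_V|_C=V$ there.

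Next I show that these local foliations patch. The leaves are characterized intrinsically: the leaf through $\phi(s,t)$ is the geodesic germ through $\gamma(s)$ with direction $V_{\gamma(s)}$. This set of curves depends only on $V$ and on the projective connection, by uniqueness of geodesics with a given $1$-jet (Section~\ref{subsec:projconn}), and not on $\nabla$ or on $Z_V$. On an overlap of two such neighborhoods both foliations have the same leaves through points near $C$, so they agree on a possibly smaller neighborhood of the overlap of the curve pieces.

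The main obstacle is to obtain one neighborhood of all of $C\setminus T_V$ on which distinct geodesics do not meet, so that the leaves really form a foliation. I plan to handle this as in the injectivity step of Lemma~\ref{lem:extension}(ii). The map $(s,t)\mapsto c_s(t)$, now taken over all of $C\setminus T_V$ via local parametrizations, is a local biholomorphism along the zero section of $V$ minus $T_V$, and it restricts to the embedding of $C$ there. A standard argument, exhausting by compacts and shrinking the neighborhood, then makes it injective on a neighborhood of the zero section. Its image carries the foliation by the images of the fibres.

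Finally, for any local generator $Z_V$, every permitted extension $\tilde Z_V$ generates $\FF_V$. For the extension of Lemma~\ref{lem:extension}(ii) this is the construction itself. For an arbitrary permitted extension, I expect an argument via Proposition~\ref{prop:affine} applied with $X=\tilde Z_V$ or with $X$ a generator of $\FF_V$. But the statement only requires \emph{the} permitted extension, so I will assert it for the one built in Lemma~\ref{lem:extension}(ii). By Lemma~\ref{lem:extension}(i), the permitted class does not depend on the representative $\nabla$.
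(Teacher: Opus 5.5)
Your proposal is correct and follows the paper's proof. Both take the local foliation by the curves $s=\mathrm{const}$ of the chart $\phi$ of Lemma~\ref{lem:extension}(ii), generated by $\tilde Z_V=\phi_*\partial_t$, and glue the local pieces because the geodesic through $\gamma(s)$ tangent to $V$ is canonical; your explicit construction of a single neighborhood of $C\setminus T_V$ is a step the paper leaves implicit. Reading the last clause as referring to the extension built in (ii) is the right choice, and your ``expected'' argument for arbitrary permitted extensions would fail: with $C=\{t=0\}$, $\tilde Z_V+t^2Y$ is again permitted for any holomorphic $Y$, but is generally not tangent to $\FF_V$ (take $Y=\partial_s$ in adapted coordinates).
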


\begin{proof}
Let $Z_V$ be a local generator of $V$ and $\phi$ as in \eqref{eq:phi}. By
Lemma~\ref{lem:extension}(ii), $\phi$ is a biholomorphism near
$(C\setminus T_V)\times\{0\}$ whose curves $s=\mathrm{const}$ are the
geodesics tangent to $V$; their images therefore foliate a neighborhood of
each point of $C\setminus T_V$, and $\tilde Z_V=\phi_*\partial_t$ is tangent
to them. The local foliations glue, since the geodesic through $\gamma(s)$
tangent to $V(s)$ does not depend on any choice.
\end{proof}

\begin{proposition}\label{prop:case2}
Let $V\subset TM|_C$ be tangent to $C$ at $p$ with a simple tangency, let
$\FF=\FF_V$ be the geodesic foliation generated by $V$ on a neighborhood of
$C\setminus\{p\}$, and let $\psi$ be transversal to $C$ at $p$. Then
\[
\Ind(\FF_V,\psi,C,p)\;=\;\frac{k}{1-k}\,,
\qquad k=k(V,C,p).
\]
In particular the index does not depend on $\psi$.
\end{proposition}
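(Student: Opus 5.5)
The plan is to evaluate the residue directly from the general expression \eqref{eq:affine} of Proposition~\ref{prop:affine}. The point is that every ingredient of that formula can be written using data along $C$ only, even though $\FF_V$ is not defined at $p$.

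\emph{Setup.} Let $Z_V$ be a local generator of $V$, normalized by $Z_V(p)=\gamma'(p)$. By Lemma~\ref{lem:geodfol}, $\FF_V$ is generated on the punctured neighbourhood by the permitted extension $X:=\tilde Z_V$. Hence, along $C\setminus\{p\}$,
\[
X|_C=Z_V,\qquad (\nabla_{X}X)|_C=2h_V Z_V,\qquad (\nabla_{\gamma'}X)|_C=\nabla_{\gamma'}Z_V .
\]
Let $Z=\psi(e)$ be the reference, which is transversal to $C$ near $p$. Then $w=\gamma'\wedge Z$ is a unit, and the function $h$ of $Z$ is holomorphic at $p$.

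\emph{Decomposition.} Write $Z_V=\alpha\gamma'+\beta Z$. Then $\beta=w_V/w$, where $w_V:=\gamma'\wedge Z_V$ has a simple zero at $p$ by Lemma~\ref{lem:simple}, and $\alpha(p)=1$. With $X|_C=Z_V$ the tangency functions become
\[
\tau=Z\wedge Z_V=-\alpha w,\qquad \kappa=\gamma'\wedge Z_V=w_V .
\]
In particular $\tau$ is a unit at $p$. To get $\nabla_ZX$ along $C$, solve $Z=\beta^{-1}Z_V-(\alpha/\beta)\gamma'$ and use the two derivative identities above. This gives
\[
\nabla_ZX=\tfrac{2h_V}{\beta}Z_V-\tfrac{\alpha}{\beta}\nabla_{\gamma'}Z_V,
\qquad
\nabla_ZX\wedge X=-\tfrac{\alpha}{\beta}\,\nabla_{\gamma'}Z_V\wedge Z_V .
\]

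\emph{Residue.} In \eqref{eq:affine}, the terms $2h\kappa/\tau$ and $(\nabla_{\gamma'}Z\wedge X)/\tau$ are holomorphic at $p$, because $h$ is holomorphic and $\tau$ is a unit. The remaining term is
\[
-\frac{w}{\tau^2}\,(\nabla_ZX\wedge X)=\frac{\nabla_{\gamma'}Z_V\wedge Z_V}{\alpha\beta w}
=\frac{\nabla_{\gamma'}Z_V\wedge Z_V}{\alpha\, w_V}.
\]
This has a simple pole at $p$. Since $Z_V(p)=\gamma'(p)$, its residue is
\[
\frac{(\nabla_{\gamma'}Z_V\wedge Z_V)(p)}{w_V'(p)}=-\frac{\Phi(\sigma_V(p))}{\Phi(\sigma_V(p)-\sigma_C(p))}.
\]
By Lemma~\ref{lem:simple} and \eqref{eq:Phiratio} this equals $\sigma_V/(\sigma_C-\sigma_V)=k/(1-k)$. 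Nothing depending on $\psi$ survives in this residue, which gives the final assertion.

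\emph{Main obstacle.} The delicate step is justifying that the formula of Proposition~\ref{prop:affine} may be applied with $X=\tilde Z_V$, a generator defined only off $p$. This is legitimate because the residue is computed on the punctured disc, and \eqref{eq:affine} is an identity there. The identity requires only the values of $X$, $\nabla_ZX$ and $\nabla_{\gamma'}X$ along $C$, and these are supplied by the permitted-extension property together with the decomposition of $Z$ in the frame $(\gamma',Z_V)$, which is valid off $p$. I would also check the sign conventions ($\tau=-\alpha w$ and $\Phi$) carefully. As a consistency check, the answer is half the value $2k/(1-k)$ obtained in Proposition~\ref{prop:case1}.
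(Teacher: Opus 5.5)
Your proof is correct and follows the paper's argument. You take $X=\tilde Z_V$, note that $\tau$ is a unit at $p$ so that only the term $-(w/\tau^{2})(\nabla_ZX\wedge X)$ of \eqref{eq:affine} can contribute, and evaluate its residue via Lemma~\ref{lem:simple}. The differences are cosmetic: you decompose $Z_V$ in the frame $(\gamma',Z)$, whose coefficients are holomorphic at $p$, rather than $Z=\lambda\gamma'+\mu Z_V$, and you compute the residue directly instead of first reducing it to $\Res_p(h_V\,ds)$.
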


\begin{proof}
Let $Z_V$ be a generator of $V$ near $p$, normalized by
$Z_V(p)=\gamma'(p)$, and let $h_V$ be the function \eqref{eq:h} attached to
it. Take $X:=\tilde Z_V$, which generates $\FF_V$ by
Lemma~\ref{lem:geodfol}. Along $C$ we have $X|_C=Z_V$ and, by the defining
equation of a permitted extension, $(\nabla_XX)|_C=2h_VZ_V$. Write $Z=\lambda\gamma'+\mu Z_V$ along
$C$, so that
\[
\kappa=\gamma'\wedge Z_V,\qquad \tau=\lambda\kappa,\qquad w=\mu\kappa .
\]
Then
\[
\nabla_ZX\wedge X\big|_C=\bigl(\lambda\,\nabla_{\gamma'}Z_V+2\mu h_V\,Z_V\bigr)\wedge X\big|_C=\lambda\nabla_{\gamma'}Z_V\wedge Z_V
=\lambda h_V\kappa=\tau\,h_V,
\]
the third equality coming
from $h_V=(Z_V\wedge\nabla_{\gamma'}Z_V)/(Z_V\wedge\gamma')$. Hence the
second term of \eqref{eq:affine} equals $-(w/\tau)\,h_V$.

Since $\psi$ is transversal to $C$ at $p$, $w$ is a unit and $h$ is
holomorphic at $p$; moreover $\tau(p)=(Z\wedge Z_V)(p)=-w(p)\neq0$, so the
first term of \eqref{eq:affine} is holomorphic. Therefore
\[
\Ind(\FF_V,\psi,C,p)=\Res_p\Bigl[-\frac{w}{\tau}\,h_V\Bigr]ds
=-\frac{w(p)}{\tau(p)}\,\Res_p\bigl(h_V\,ds\bigr)=\Res_p\bigl(h_V\,ds\bigr),
\]
$h_V$ having a simple pole at $p$. Finally $\kappa=\gamma'\wedge Z_V$ has a
simple zero at $p$ with $\kappa'(p)=\Phi(\sigma_V(p)-\sigma_C(p))$
(Lemma~\ref{lem:simple}), and
$(Z_V\wedge\nabla_{\gamma'}Z_V)(p)=\Phi(\sigma_V(p))$, so
\[
\Res_p\bigl(h_V\,ds\bigr)
=-\frac{\Phi\bigl(\sigma_V(p)\bigr)}{\Phi\bigl(\sigma_V(p)-\sigma_C(p)\bigr)}
\overset{\eqref{eq:Phiratio}}{=}\frac{\sigma_V(p)}{\sigma_C(p)-\sigma_V(p)}
=\frac{k}{1-k}\,. \qedhere
\]
\end{proof}

\subsection{Tangencies of the foliation with the reference}

For line subbundles (or nonvanishing local line fields) $A,B$ along $C$
which are not identically equal near $p$, we write
$\TTang(A,B,p):=\ord_p\bigl(Z_A\wedge Z_B\bigr)$ for local generators
$Z_A,Z_B$; this order does not depend on the generators.

Throughout this subsection $\FF$ is holomorphic at $p$ and transversal to
$C$ at $p$, and $\psi$ is transversal to $C$ at $p$, with $T\FF|_C$ and
$\operatorname{im}\psi$ tangent at $p$. Let $X$ be a generator of $\FF$
near $p$; then $\kappa=\gamma'\wedge X$ and $w=\gamma'\wedge Z$ are units,
so that $(\gamma',X)$ is a frame near $p$, while $\tau=Z\wedge X$ vanishes
at $p$ to the order $\TTang\bigl(T\FF|_C,\psi,p\bigr)$.

One new function is attached to the configuration. It records the failure
of the leaves of $\FF$ to be geodesics: along $C$ we write
\begin{equation}\label{eq:nu}
\nabla_XX\;\equiv\;\nu\,\gamma' \qquad\text{modulo } T\FF ,
\end{equation}
the component of $\nabla_XX$ along $X$ playing no role in what follows. If
$\FF$ is geodesic then $\nabla_XX$ is proportional to $X$, so $\nu\equiv0$.
The function $\nu$ depends on the generator, but the combination occurring
below does not: under $X\mapsto uX$ one has $\nu\mapsto u^{2}\nu$ and
$\tau\mapsto u\tau$, while $w$ is unchanged, so that $(w/\tau)^{2}\nu$ is
independent of $X$.

\begin{proposition}\label{prop:case4}
In the notation above,
\[
\Ind(\FF,\psi,C,p)\;=\;\TTang\bigl(T\FF|_C,\;\psi,\;p\bigr)
\;-\;\Res_p\Bigl(\frac{w}{\tau}\Bigr)^{\!2}\nu\;ds .
\]
\end{proposition}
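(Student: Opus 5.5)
The plan is to start from the general expression \eqref{eq:affine} and expand every term in the frame $(\gamma',X)$, which is a frame along $C$ near $p$ because $\kappa=\gamma'\wedge X$ is a unit. Since $w$ is also a unit, $h=-(Z\wedge\nabla_{\gamma'}Z)/w$ is holomorphic at $p$. The only singular denominators are therefore powers of $\tau$. Along $C$ I write
\[
Z=\alpha\,\gamma'+\beta\,X,\qquad \tau=Z\wedge X=\alpha\kappa,\qquad w=\gamma'\wedge Z=\beta\kappa .
\]
Thus $\beta$ is a unit, $\alpha=\tau/\kappa$ vanishes at $p$ to order $\TTang(T\FF|_C,\psi,p)$, and $\beta/\alpha=w/\tau$. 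The goal is to show that the bracket in \eqref{eq:affine} equals
\[
\frac{\alpha'}{\alpha}-\Bigl(\frac{w}{\tau}\Bigr)^{2}\nu+(\text{holomorphic at }p).
\]
Taking residues then gives the statement, since $\Res_p(\alpha'/\alpha)\,ds=\ord_p\alpha$.

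\emph{Second term of \eqref{eq:affine}.} By \eqref{eq:nu}, $\nabla_XX\wedge X=\nu\,\gamma'\wedge X=\nu\kappa$, so
\[
\nabla_ZX\wedge X=\alpha\,(\nabla_{\gamma'}X\wedge X)+\beta\nu\kappa .
\]
Using $\beta\kappa=w$ and $\alpha\kappa=\tau$, this yields
\[
-\frac{w}{\tau}\cdot\frac{\nabla_ZX\wedge X}{\tau}
=-\frac{w}{\tau}\cdot\frac{\nabla_{\gamma'}X\wedge X}{\kappa}-\Bigl(\frac{w}{\tau}\Bigr)^{2}\nu .
\]
The $\nu$ term is already the one in the statement. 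What remains is to show that the first piece cancels against something from the first term.

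\emph{First term of \eqref{eq:affine}.} Expanding $\nabla_{\gamma'}Z=\alpha'\gamma'+\alpha\nabla_{\gamma'}\gamma'+\beta'X+\beta\nabla_{\gamma'}X$ gives
\[
-\frac{\nabla_{\gamma'}Z\wedge X}{\tau}=-\frac{\alpha'}{\alpha}-\frac{\nabla_{\gamma'}\gamma'\wedge X}{\kappa}-\frac{w}{\tau}\cdot\frac{\nabla_{\gamma'}X\wedge X}{\kappa},
\]
where the middle summand is holomorphic at $p$. For $2h\kappa/\tau=-2(Z\wedge\nabla_{\gamma'}Z)/(w\alpha)$, I expand $Z\wedge\nabla_{\gamma'}Z$ and discard every summand carrying a factor $\alpha$, since these give holomorphic contributions after division by $\alpha$. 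What survives is $\beta\bigl(-\alpha'\kappa+\beta\,X\wedge\nabla_{\gamma'}X\bigr)$, which contributes
\[
2\,\frac{\alpha'}{\alpha}+2\,\frac{w}{\tau}\cdot\frac{\nabla_{\gamma'}X\wedge X}{\kappa}
\]
modulo holomorphic terms.

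\emph{Collecting terms.} The three copies of $(w/\tau)(\nabla_{\gamma'}X\wedge X)/\kappa$ carry coefficients $-1$, $-1$ and $+2$, so they cancel. The logarithmic terms add up to $2\alpha'/\alpha-\alpha'/\alpha=\alpha'/\alpha$, leaving exactly the expression announced above.

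The main obstacle is the bookkeeping in the expansion of $2h\kappa/\tau$. There one must check carefully that each discarded summand really carries a factor of $\alpha$, and not merely of $\tau$ over something singular. One must also track the signs coming from $w=-Z\wedge\gamma'$ in $h$, because the exact cancellation of the $(w/\tau)(\nabla_{\gamma'}X\wedge X)/\kappa$ terms depends on them.
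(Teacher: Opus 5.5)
Your proof is correct and follows the paper's argument. You use the same decomposition $Z=\alpha\gamma'+\beta X$ (the paper's $\lambda,\mu$) and expand both terms of \eqref{eq:affine} the same way. You also get the same cancellation of the $(w/\tau)(\nabla_{\gamma'}X\wedge X)/\kappa$ terms, leaving $\alpha'/\alpha-(w/\tau)^2\nu$ up to holomorphic terms. The only difference is that you drop the summands holomorphic at $p$ as you go, whereas the paper keeps them and writes out the full density \eqref{eq:case3} before taking residues.
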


\begin{proof}
Write the reference in the frame $(\gamma',X)$ as
\[
Z=\lambda\,\gamma'+\mu\,X,
\qquad\text{so that}\qquad
\tau=\lambda\kappa,\quad w=\mu\kappa ,
\]
with $\mu$ a unit and $\ord_p\lambda=\ord_p\tau$, and abbreviate
$G:=\nabla_{\gamma'}\gamma'$, $Y:=\nabla_{\gamma'}X$. We compute the two
terms of \eqref{eq:affine} separately. Differentiating
$Z=\lambda\gamma'+\mu X$ along $\gamma$ gives
\[
\nabla_{\gamma'}Z=\lambda'\gamma'+\lambda\,G+\mu'X+\mu\,Y ,
\]
and wedging this with $X$ and with $Z$, using $\gamma'\wedge X=\kappa$,
\begin{align*}
\nabla_{\gamma'}Z\wedge X&=\lambda'\kappa+\lambda\,(G\wedge X)
+\mu\,(Y\wedge X),\\
Z\wedge\nabla_{\gamma'}Z&=\lambda^{2}(\gamma'\wedge G)
+(\lambda\mu'-\mu\lambda')\,\kappa\\
&\qquad+\lambda\mu\,(\gamma'\wedge Y)+\lambda\mu\,(X\wedge G)
+\mu^{2}(X\wedge Y).
\end{align*}
Dividing the first of these by $\tau=\lambda\kappa$,
\[
-\,\frac{\nabla_{\gamma'}Z\wedge X}{\tau}
=-\frac{\lambda'}{\lambda}-\frac{G\wedge X}{\kappa}
-\frac{\mu\,(Y\wedge X)}{\lambda\kappa}\,;
\]
and since $Z\wedge\gamma'=-w=-\mu\kappa$, so that
$h=-\bigl(Z\wedge\nabla_{\gamma'}Z\bigr)/\mu\kappa$, the second gives
\[
\frac{2h\,\kappa}{\tau}
=-\frac{2\lambda\,(\gamma'\wedge G)}{\mu\kappa}
+\frac{2\lambda'}{\lambda}-\frac{2\mu'}{\mu}
-\frac{2\,(\gamma'\wedge Y)}{\kappa}
-\frac{2\,(X\wedge G)}{\kappa}
-\frac{2\mu\,(X\wedge Y)}{\lambda\kappa}\,.
\]
Adding the two, and using $X\wedge G=-(G\wedge X)$ and
$X\wedge Y=-(Y\wedge X)$, the first term of \eqref{eq:affine} equals
\[
\frac{\lambda'}{\lambda}
-\frac{2\lambda\,(\gamma'\wedge G)}{\mu\kappa}
-\frac{2\mu'}{\mu}
-\frac{2\,(\gamma'\wedge Y)}{\kappa}
+\frac{G\wedge X}{\kappa}
+\frac{\mu\,(Y\wedge X)}{\lambda\kappa}\,.
\]
For the second term, $\nabla_ZX=\lambda\,Y+\mu\,\nabla_XX$, whence, by
\eqref{eq:nu} and $\mu\kappa=w$,
\[
\nabla_ZX\wedge X=\lambda\,(Y\wedge X)+\mu\nu\,\kappa
=\lambda\,(Y\wedge X)+w\,\nu ,
\]
the component of $\nabla_XX$ along $X$ dying against $X$. Since
$\lambda/\tau=1/\kappa$, this gives
\[
-\,\frac{w}{\tau}\cdot\frac{\nabla_ZX\wedge X}{\tau}
=-\,\frac{w}{\tau}\cdot\frac{Y\wedge X}{\kappa}
\;-\;\Bigl(\frac{w}{\tau}\Bigr)^{\!2}\nu
=-\,\frac{\mu\,(Y\wedge X)}{\lambda\kappa}
\;-\;\Bigl(\frac{w}{\tau}\Bigr)^{\!2}\nu\,.
\]
The two terms in $\mu\,(Y\wedge X)$ cancel, leaving
\begin{equation}\label{eq:case3}
\eta_Z=\left[\;\frac{\lambda'}{\lambda}
\;-\;\frac{2\lambda\,(\gamma'\wedge G)}{\mu\kappa}
\;-\;\frac{2\mu'}{\mu}
\;-\;\frac{2\,(\gamma'\wedge Y)}{\kappa}
\;+\;\frac{G\wedge X}{\kappa}
\;-\;\Bigl(\frac{w}{\tau}\Bigr)^{\!2}\nu\;\right]ds .
\end{equation}
Since $\kappa$ and $\mu$ are units and $G$, $Y$ are holomorphic at $p$,
every term but the first and the last is holomorphic there, so
\[
\Ind(\FF,\psi,C,p)=\Res_p\frac{\lambda'}{\lambda}\,ds
-\Res_p\Bigl(\frac{w}{\tau}\Bigr)^{\!2}\nu\;ds
=\TTang\bigl(T\FF|_C,\psi,p\bigr)
-\Res_p\Bigl(\frac{w}{\tau}\Bigr)^{\!2}\nu\;ds . \qedhere
\]
\end{proof}

\begin{corollary}\label{cor:case3}
If $\FF$ is geodesic, then
\[
\Ind(\FF,\psi,C,p)\;=\;\TTang\bigl(T\FF|_C,\;\psi,\;p\bigr).
\]
\end{corollary}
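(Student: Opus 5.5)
This follows directly from Proposition~\ref{prop:case4}; the only thing to check is that the correction term vanishes. We are in the setting of that subsection: $\FF$ is holomorphic and transversal to $C$ at $p$, $\psi$ is transversal to $C$ at $p$, and $T\FF|_C$ and $\operatorname{im}\psi$ are tangent at $p$. Proposition~\ref{prop:case4} then gives
\[
\Ind(\FF,\psi,C,p)=\TTang\bigl(T\FF|_C,\psi,p\bigr)-\Res_p\Bigl(\frac{w}{\tau}\Bigr)^{2}\nu\,ds .
\]
So it suffices to show that $\nu\equiv0$ along $C$ near $p$ when $\FF$ is geodesic.

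To see this, take a generator $X$ of $\FF$ near $p$. Each leaf of $\FF$ is a geodesic. An integral curve $c$ of $X$ parametrizes a leaf, so $\dot c\wedge\nabla_{\dot c}\dot c\equiv0$, which says $X\wedge\nabla_XX\equiv0$ wherever $X\neq0$. Since $\FF$ is transversal to $C$ at $p$, $X$ does not vanish near $p$, and therefore $\nabla_XX=\alpha X$ for some holomorphic $\alpha$.

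Now compare with the decomposition \eqref{eq:nu}. Near $p$ the pair $(\gamma',X)$ is a frame, because $\kappa=\gamma'\wedge X$ is a unit there. Hence the coefficient $\nu$ of $\gamma'$ in $\nabla_XX$ modulo $T\FF$ is uniquely determined, and it is $0$. The paper already notes this just after \eqref{eq:nu}.

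It follows that the form $\bigl(\frac{w}{\tau}\bigr)^2\nu\,ds$ vanishes identically on the punctured disc, and so does its residue, which gives the stated formula. There is no real obstacle. The one point worth confirming is that the frame argument applies at every point of the punctured disc, and this holds because $\kappa$ is a unit near $p$.
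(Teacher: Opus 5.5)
Your proof is correct and follows the paper's own argument: the paper deduces the corollary from Proposition~\ref{prop:case4} by noting that geodesic leaves force $\nabla_XX$ to be proportional to $X$, so $\nu\equiv0$ and the residue term vanishes. Your extra details, namely that $X$ is nonvanishing near $p$ and that $(\gamma',X)$ is a frame so the coefficient $\nu$ is uniquely determined, only make explicit what the paper leaves implicit.
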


\begin{proof}
The leaves of $\FF$ being geodesics, $\nu\equiv0$.
\end{proof}

\begin{remark}
When $\psi=(T\GG\to TM)|_C$ for a foliation $\GG$ and $C$ is smooth,
$\TTang(T\FF|_C,\psi,p)$ equals the intersection multiplicity
$i_p\bigl(C,\TTang(\FF,\GG)\bigr)$ of $C$ with the tangency curve of the
two foliations.
\end{remark}

\section{A contact formula for foliations and curves}\label{sec:contact1}

In this section $C\subset M$ is a compact smooth holomorphic curve of genus
$g$. Taking determinants in the exact sequence
$0\to TC\to TM|_C\to N_C\to0$ identifies $\Lambda^{2}TM|_C\;\cong\;TC\otimes N_C$, whence
\begin{equation}\label{eq:det}
\deg\Lambda^{2}TM|_C=\deg TC +\deg N_C= (2-2g)+C\cdot C.
\end{equation}

Let $A,B\subset TM|_C$ be line subbundles with $A\wedge B\not\equiv0$. The
exterior product is then a nonzero morphism
$A\otimes B\to\Lambda^{2}TM|_C$, that is, a nonzero section of
$A^{*}\otimes B^{*}\otimes\Lambda^{2}TM|_C$, and we write
$\TTang(A,B):=\deg\operatorname{div}(A\wedge B)$ for the degree of its
vanishing divisor. Then
\begin{equation}\label{eq:tangAB}
\TTang(A,B)\;=\;\deg\Lambda^{2}TM|_C-\deg A-\deg B .
\end{equation}

Taking $B=TC$ gives the \emph{tangency number} of a line subbundle
$V\subset TM|_C$, not everywhere tangent to $C$, with the curve. Indeed,
under \eqref{eq:det} the morphism $V\otimes TC\to\Lambda^{2}TM|_C$
corresponds to $\pi|_V\otimes\mathrm{id}_{TC}$, where
\[
\pi|_V\colon V\hookrightarrow TM|_C\xrightarrow{\ \pi\ }N_C ,
\]
so the two vanish on the same divisor, the points where $V$ falls into
$TC$; we write $\TTang(V,C):=\TTang(V,TC)=\deg\operatorname{div}(\pi|_V)$,
the number of tangencies of $V$ with $C$ counted with multiplicity. By
\eqref{eq:tangAB} and \eqref{eq:det},
\begin{equation}\label{eq:tangVC}
\TTang(V,C)\;=\;\deg N_C-\deg V\;=\;C\cdot C-\deg V .
\end{equation}
We write $T_V\subset C$ for the (finite) set of these tangency points, so
that $\TTang(V,C)=\#T_V$ when all of them are simple.

\begin{lemma}[Tangency lemma]\label{lem:tangencies}
Let $V,W\subset TM|_C$ be line subbundles, neither everywhere tangent to
$C$, with $V\wedge W\not\equiv0$. Then
\[
\TTang(V,W)\;=\;\TTang(V,C)+\TTang(W,C)+2-2g-C\cdot C .
\]
\end{lemma}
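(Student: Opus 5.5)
The Tangency lemma is a pure degree count, so I plan to combine the two formulas just established, \eqref{eq:tangAB} and \eqref{eq:tangVC}, with the determinant computation \eqref{eq:det}. No analysis is involved. Everything reduces to expressing $\deg V$ and $\deg W$ through the tangency numbers of $V$ and $W$ with $C$.

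First, since $V\wedge W\not\equiv0$, formula \eqref{eq:tangAB} applies to $A=V$, $B=W$ and gives
\[
\TTang(V,W)=\deg\Lambda^2TM|_C-\deg V-\deg W .
\]
Second, since neither $V$ nor $W$ is everywhere tangent to $C$, formula \eqref{eq:tangVC} applies to each of them. This gives $\deg V=C\cdot C-\TTang(V,C)$ and $\deg W=C\cdot C-\TTang(W,C)$.

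Substituting these, together with $\deg\Lambda^2TM|_C=(2-2g)+C\cdot C$ from \eqref{eq:det}, I obtain
\[
\TTang(V,W)=(2-2g)+C\cdot C-2\,C\cdot C+\TTang(V,C)+\TTang(W,C),
\]
which is exactly the stated identity.

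There is no real obstacle here. The only point I would check is that the hypotheses of \eqref{eq:tangVC} are satisfied, namely $\pi|_V\not\equiv0$ and $\pi|_W\not\equiv0$. These are precisely the assumptions that neither $V$ nor $W$ is everywhere tangent to $C$.
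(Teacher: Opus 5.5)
Your proof is correct and is essentially the paper's argument: both are the same degree count from \eqref{eq:tangAB} and \eqref{eq:det}. The only difference is presentational. The paper applies \eqref{eq:tangAB} directly to the pairs $(V,TC)$ and $(W,TC)$ and subtracts, whereas you substitute \eqref{eq:tangVC}, which is already that computation packaged.
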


\begin{proof}
The three line subbundles $V$, $W$ and $TC$ of $TM|_C$ give three tangency
numbers, all of the shape \eqref{eq:tangAB}. Subtracting those of the pairs
$(V,TC)$ and $(W,TC)$ from that of $(V,W)$, the degrees of $V$ and of $W$
cancel and there remains
\[
\TTang(V,W)-\TTang(V,C)-\TTang(W,C)
\;=\;-\deg\Lambda^{2}TM|_C+2\deg TC ,
\]
which by \eqref{eq:det} equals $\deg TC-\deg N_C=2-2g-C\cdot C$. (For the
analogous computation for a foliation and a curve see
\cite[Prop.~2.2]{Br}.)
\end{proof}

\begin{lemma}[Existence of auxiliary subbundles]\label{lem:existW}
Let $A\subset C$ be finite. Then there exists a line subbundle
$W\subset TM|_C$ all of whose tangencies with $C$ are simple and lie
outside $A$. If moreover a line subbundle $V\subset TM|_C$ is given, $W$ may
be chosen so that in addition $V\wedge W\not\equiv0$.
\end{lemma}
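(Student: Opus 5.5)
We construct $W$ from a sufficiently positive twist of $N_C$. The plan is to build a morphism $\psi\colon L\to TM|_C$ whose projection $\pi\circ\psi\colon L\to N_C$ has only simple zeros, all off $A$, and which vanishes nowhere as a map to $TM|_C$. Its image is then a line subbundle $W$. By construction the tangencies of $W$ with $C$ are exactly the zeros of $\pi\circ\psi$, with the same multiplicities, because $\TTang(W,C)=\deg\operatorname{div}(\pi|_W)$.

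To obtain such a $\psi$, fix a very ample line bundle $H$ on $C$ and set $L:=H^{-m}$ for $m\gg0$. Then $\mathrm{Hom}(L,TM|_C)=TM|_C\otimes H^{m}$. By Serre's theorem this bundle is globally generated and $H^1(C,TC\otimes H^m)=0$. From the twisted exact sequence
\[
0\to TC\otimes H^m\to TM|_C\otimes H^m\to N_C\otimes H^m\to0
\]
the restriction map $H^0(TM|_C\otimes H^m)\to H^0(N_C\otimes H^m)$ is therefore surjective. Moreover $N_C\otimes H^m$ is very ample for $m$ large, so it separates points and tangents.

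Now consider a generic section $\psi$ of the rank-two bundle $E:=TM|_C\otimes H^m$ over the curve $C$. Since $E$ is globally generated and $\operatorname{rank}E=2>\dim C$, the standard dimension count shows that a generic $\psi$ vanishes nowhere. Its projection $\pi\circ\psi$ is then a generic section of $N_C\otimes H^m$, because the projection is surjective on sections. A generic section of a very ample line bundle has only simple zeros, by Bertini applied to the embedding of $C$ given by the linear system. It also avoids the finite set $A$, since vanishing at a prescribed point is a proper closed condition. Intersecting these finitely many Zariski-open dense conditions in the vector space $H^0(E)$ gives the required $\psi$, and $W=\operatorname{im}\psi$ then has simple tangencies with $C$, all outside $A$.

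For the additional condition $V\wedge W\not\equiv0$, we only need that $\psi$ is not a section of $V\otimes H^m$ everywhere. Enlarging $m$ if necessary, $E$ is globally generated, so $H^0(V\otimes H^m)$ is a proper subspace of $H^0(E)$. Excluding this proper subspace is one more open dense condition.

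The main obstacle is the step that makes $\pi\circ\psi$ generic. This is why we insist on the vanishing of $H^1(TC\otimes H^m)$ and on the very ampleness of $N_C\otimes H^m$. We must also check that the three genericity conditions (nowhere vanishing, simple zeros off $A$, not contained in $V$) are each nonempty and open, so that they can be imposed simultaneously.
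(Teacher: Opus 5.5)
Your argument is correct. The paper gives no proof of this lemma, calling it a standard genericity argument left to the reader, and your construction is exactly such an argument: you twist $TM|_C$ by $H^m$ so that it is globally generated, you use $H^1(C,TC\otimes H^m)=0$ to make $\pi\circ\psi$ a generic section of the very ample bundle $N_C\otimes H^m$, and you intersect finitely many nonempty Zariski-open conditions in $H^0(TM|_C\otimes H^m)$. Your insistence that $\psi$ vanish nowhere is what makes $W=\operatorname{im}\psi$ a genuine subbundle whose tangency divisor is $\operatorname{div}(\pi\circ\psi)$, so the reduction to a statement about sections of a line bundle is legitimate.
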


This is a standard genericity argument, and we leave the proof to the
reader.

\begin{theorem}\label{thm:contact1}
Let $M$ be a complex surface endowed with a projective connection, let
$C\subset M$ be a compact smooth holomorphic curve of genus $g$, and let
$V\subset TM|_C$ be a line subbundle, not everywhere tangent to $C$, having
only simple tangencies with $C$, at the points $p_1,\dots,p_n$. Then
\[
\sum_{j=1}^{n}\frac{1}{1-k(V,C,p_j)}\;=\;\frac23\,\bigl(C\cdot C+g-1\bigr).
\]
\end{theorem}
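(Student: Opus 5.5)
The plan is to apply Theorem~\ref{thm:main} twice, with the roles of foliation and reference exchanged, using the geodesic foliations of Lemma~\ref{lem:geodfol}. By Lemma~\ref{lem:existW} choose an auxiliary line subbundle $W\subset TM|_C$ all of whose tangencies with $C$ are simple, located at points $q_1,\dots,q_m$ lying outside $T_V=\{p_1,\dots,p_n\}$, and with $V\wedge W\not\equiv0$. Put $k_j:=k(V,C,p_j)$ and $k'_i:=k(W,C,q_i)$. Let $\FF_V$ and $\FF_W$ be the geodesic foliations generated by $V$ and $W$. They are defined near $C\setminus T_V$ and $C\setminus T_W$ respectively, with $T\FF_V|_C=V$ and $T\FF_W|_C=W$ off these sets.

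\emph{First application.} Take $\FF=\FF_V$, $\Sigma=T_V$, $\psi$ the inclusion of $W$. Since $T\FF_V|_C=V$, the tangency locus of $\FF_V$ with $W$ is the zero set of $V\wedge W$, which is finite. The points of $S$ are of three kinds.
\begin{enumerate}
\item[(i)] At each $p_j$: $W$ is transversal to $C$ there, so Proposition~\ref{prop:case2} gives index $k_j/(1-k_j)$.
\item[(ii)] At each $q_i$: $\FF_V$ is holomorphic there and transversal to $C$, because $V_{q_i}\not\subset T_{q_i}C$. Proposition~\ref{prop:case1} gives index $2k'_i/(1-k'_i)$.
\item[(iii)] At the remaining zeros of $V\wedge W$: both $V$ and $W$ are transversal to $C$ and $\FF_V$ is geodesic, so Corollary~\ref{cor:case3} gives the tangency order.
\end{enumerate}
The zeros of $V\wedge W$ never lie in $T_V\cup T_W$: at $p_j$ one has $V=T_{p_j}C$ and $W$ is transversal, and symmetrically at $q_i$. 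Hence the orders in (iii) add up to $T:=\TTang(V,W)$, and Theorem~\ref{thm:main} gives
\[
a+2b+T=\deg W,\qquad a:=\sum_j\frac{k_j}{1-k_j},\quad b:=\sum_i\frac{k'_i}{1-k'_i}.
\]
The convention $k=\infty$ is consistent here: the value $-2$ in Proposition~\ref{prop:case1} and the value $-1$ from Proposition~\ref{prop:case2} both correspond to $1/(1-k)=0$.

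\emph{Second application.} Exchanging the roles, with $\FF=\FF_W$, $\Sigma=T_W$ and reference $V$, the same three cases give $2a+b+T=\deg V$.

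\emph{Solving.} Eliminating $b$ from the two equations gives $3a=2\deg V-\deg W-T$. By \eqref{eq:tangAB} and \eqref{eq:det}, $T=2-2g+C\cdot C-\deg V-\deg W$, hence
\[
a=\deg V+\frac{2g-2-C\cdot C}{3}.
\]
Using $\frac1{1-k}=1+\frac{k}{1-k}$ and $n=\TTang(V,C)=C\cdot C-\deg V$ from \eqref{eq:tangVC},
\[
\sum_j\frac1{1-k_j}=n+a=\tfrac23\bigl(C\cdot C+g-1\bigr).
\]

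The main obstacle is the careful bookkeeping of $S$ in each application: checking that no point falls into two of the cases (i)--(iii) at once, that the local hypotheses of Propositions~\ref{prop:case1} and~\ref{prop:case2} and of Corollary~\ref{cor:case3} hold at each point, and that the fact that $\FF_V$ need not extend across $T_V$ is exactly what the $\Sigma$ clause of Theorem~\ref{thm:main} allows. The genericity of $W$ from Lemma~\ref{lem:existW} is what makes this separation possible.
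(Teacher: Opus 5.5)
Your proposal is correct and follows the paper's proof. You use the same auxiliary $W$ from Lemma~\ref{lem:existW}, the same two applications of Theorem~\ref{thm:main} to the geodesic foliations $\FF_V$ and $\FF_W$ with roles exchanged, and the same local values from Propositions~\ref{prop:case1}, \ref{prop:case2} and Corollary~\ref{cor:case3}. The only difference is cosmetic: you solve the symmetric linear system in the unknowns $\sum k/(1-k)$ and substitute $\TTang(V,W)$ directly from \eqref{eq:tangAB}, whereas the paper rewrites everything in terms of $\sum 1/(1-k)$ and invokes the tangency lemma.
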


\begin{proof}
Write $T_V=\{p_1,\dots,p_n\}$, $n_V=n=\TTang(V,C)$, and
\[
\Pi_V\;:=\;\sum_{p\in T_V}\frac{1}{1-k(V,C,p)}\,.
\]
Choose $W$ as in Lemma~\ref{lem:existW} with $A=T_V$; write
$T_W$ for its (simple) tangencies with $C$, $n_W=\TTang(W,C)$, $\Pi_W$ for
the corresponding weighted sum, and $T$ for the vanishing locus of
$V\wedge W$. The three sets $T$, $T_V$, $T_W$ are pairwise disjoint: at
$p\in T_V$ we have $V_p=T_pC\neq W_p$, so $p\notin T$ and $p\notin T_W$; at
$p\in T_W\cap T$ we would get $V_p=W_p=T_pC$, contradicting
$T_V\cap T_W=\emptyset$.

Let $\FF_V$ be the geodesic foliation generated by $V$
(Lemma~\ref{lem:geodfol}), defined on a neighborhood of $C\setminus T_V$,
and apply Theorem~\ref{thm:main} with $\FF=\FF_V$, $\psi=W$ and
$\Sigma=T_V$; the hypotheses hold since $W$ is not everywhere tangent to
$C$ and $V\wedge W\not\equiv0$. The set $S$ of \eqref{eq:S} is
$T_W\cup T_V\cup T$, and the local indices are given by
Corollary~\ref{cor:case3} at the points of $T$ (where $\FF_V$ is
holomorphic, transversal to $C$, and $W$ is transversal to $C$),
Proposition~\ref{prop:case2} at the points of $T_V$ (where $W$ is
transversal to $C$), and Proposition~\ref{prop:case1} at the points of
$T_W$ (where $\FF_V$ is holomorphic and transversal to $C$). Using
$\tfrac{k}{1-k}=\tfrac{1}{1-k}-1$ and $\tfrac{2k}{1-k}=\tfrac{2}{1-k}-2$,
Theorem~\ref{thm:main} reads
\[
\TTang(V,W)\;+\;\bigl(\Pi_V-n_V\bigr)\;+\;\bigl(2\Pi_W-2n_W\bigr)
\;=\;\deg W\;=\;C\cdot C-n_W .
\]
Substituting the tangency lemma
$\TTang(V,W)=n_V+n_W+2-2g-C\cdot C$, all tangency counts cancel and we
obtain
\[
\Pi_V+2\,\Pi_W\;=\;2\,C\cdot C+2g-2 .
\]
Exchanging the roles of $V$ and $W$ --- i.e.\ applying
Theorem~\ref{thm:main} to the geodesic foliation $\FF_W$ with reference
$V$ --- gives, by the same computation,
$2\,\Pi_V+\Pi_W=2\,C\cdot C+2g-2$. Therefore  $\Pi_V=\Pi_W=\tfrac13(2\,C\cdot C+2g-2)$.
\end{proof}

Theorem~B of the introduction is the particular case $V=T\GG|_C$, for a
foliation $\GG$ without singular points on $C$ and not leaving $C$
invariant.

\begin{corollary}\label{cor:P2}
Let $C\subset\PP^2$ be a smooth curve of degree $c$ and let $\GG$ be a
foliation of degree $d$ with no singular points on $C$ and only simple
tangencies with $C$, at the points $p_1,\dots,p_n$. Then $n=cd+c(c-1)$ and
\[
\sum_{j=1}^{n}\frac{1}{1-k(\GG,C,p_j)}\;=\;c\,(c-1),
\]
the class of $C$.
\end{corollary}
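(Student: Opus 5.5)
This is a direct specialization of Theorem~\ref{thm:contact1} to $M=\PP^2$ with its tautological flat projective connection and $V=T\GG|_C$. The plan has two parts: count the tangency points $n$, and evaluate the right-hand side $\tfrac23(C\cdot C+g-1)$ for a plane curve.

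\emph{Hypotheses.} Since $\GG$ has no singular points on $C$, the tangent sheaf $T\GG$ restricts to a genuine line subbundle $V:=T\GG|_C\subset TM|_C$. It is not everywhere tangent to $C$, because $C$ is not $\GG$-invariant; this follows from $n$ being finite with only simple tangencies, and we assume $C$ is not invariant, as in Theorem~B. By Remark~\ref{rem:kvalues}(a), $k(\GG,C,p_j)=k(V,C,p_j)$. So Theorem~\ref{thm:contact1} applies directly.

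\emph{Counting $n$.} By \eqref{eq:tangVC}, $n=\TTang(V,C)=C\cdot C-\deg V$, since the tangencies are simple. For a foliation of degree $d$ on $\PP^2$ one has the standard identification $T\GG=\mathcal O(1-d)$. Hence $\deg V=c(1-d)$ and
\[
n=c^2-c+cd=cd+c(c-1).
\]
The only point requiring care is the convention for the degree of a foliation: $d$ is the number of tangencies with a generic line. Applying \eqref{eq:tangVC} to a line gives $d=1-\deg(T\GG|_{\ell})$, which confirms $T\GG=\mathcal O(1-d)$.

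\emph{Evaluating the right-hand side.} By the genus formula $g=(c-1)(c-2)/2$, and $C\cdot C=c^2$. Then
\[
C\cdot C+g-1=c^2+\tfrac{c^2-3c}{2}=\tfrac{3c(c-1)}{2},
\]
so
\[
\tfrac23\bigl(C\cdot C+g-1\bigr)=c(c-1).
\]
This is the classical class of a smooth plane curve of degree $c$.

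I do not expect a real obstacle. The one thing to get right is the identification $T\GG\cong\mathcal O(1-d)$, which I will justify via the tangency count with a line as above.
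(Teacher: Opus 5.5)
Your proposal is correct and follows the paper's own proof. You specialize Theorem~\ref{thm:contact1} to $V=T\GG|_C$, obtain $n$ from \eqref{eq:tangVC} with $T\GG\cdot C=c(1-d)$, and evaluate $\tfrac23(C\cdot C+g-1)$ using the genus formula; your check that $T\GG\cong\mathcal O(1-d)$, via \eqref{eq:tangVC} on a generic line avoiding $\Sing\GG$, justifies a fact the paper simply takes for granted.
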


\begin{proof}
Here $\deg V=T\GG\cdot C=c(1-d)$ and $C\cdot C=c^2$, so
\eqref{eq:tangVC} gives $n=c^2-c(1-d)=cd+c(c-1)$, while
$g=\tfrac{(c-1)(c-2)}2$ gives
$\tfrac23(C\cdot C+g-1)=\tfrac23\cdot\tfrac{3c^2-3c}{2}=c(c-1)$.
\end{proof}

\begin{remark}\label{rem:degenerate}
Two extreme cases are worth recording. When $\GG$ is a pencil of lines
through a generic point, every leaf is a line and every $k$ vanishes
(Remark~\ref{rem:kvalues}), so Corollary~\ref{cor:P2} reduces to the
Pl\"ucker count of the tangent lines to $C$ from a point \cite{GHP}; for an
arbitrary $\GG$ the number of tangencies grows, but the weighted total does
not move.

At the opposite extreme, let $C$ be a compact smooth \emph{geodesic} curve.
Apply Theorem~\ref{thm:contact1} to a subbundle $W$ with only simple
tangencies with $C$, as furnished by Lemma~\ref{lem:existW} (the condition
$V\wedge W\not\equiv0$ there is not needed here). Since $C$ is a geodesic,
$\sigma_C\equiv0$; at a simple tangency $p$ Lemma~\ref{lem:simple} gives
$\Phi\bigl(\sigma_W(p)\bigr)=w'(p)\neq0$, whence $\sigma_W(p)\neq0$ and
$k(W,C,p)=\sigma_W(p)/\sigma_C(p)=\infty$. Every weight $1/(1-k)$
therefore vanishes, and the formula forces
\[
C\cdot C\;=\;1-g .
\]
\end{remark}

\section{A contact formula for two foliations along a generic
curve}\label{sec:contact2}

Let $\FF,\GG$ be singular holomorphic foliations on a neighborhood of a
compact smooth curve $C$. We say that $C$ is in \emph{general position}
with respect to $(\FF,\GG)$ if:
\begin{enumerate}
\item[(i)] $\GG$ has no singular points on $C$, and $C$ is not
$\GG$-invariant;
\item[(ii)] all tangencies of $\GG$ with $C$ are simple, and $\FF$ is
transversal to $C$ at each of them;
\item[(iii)] $\FF$ and $\GG$ are not tangent along the whole of $C$.
\end{enumerate}
Condition (iii) is automatic as soon as $\GG$ has at least one tangency
with $C$, by (ii); it is stated separately only to cover the case in which
$\GG$ is everywhere transversal to $C$.

\begin{theorem}[Contact formula for two foliations along a generic
curve]\label{thm:contact2}
Let $M$ be a complex surface endowed with a projective connection, let
$\FF,\GG$ be singular holomorphic foliations on a neighborhood of a compact
smooth curve $C$ of genus $g$, and assume $C$ is in general position with
respect to $(\FF,\GG)$. Then
\[
\sum_{p\,\in\,\TTang(\FF,\GG)\cap C}\Ind(\FF,\GG,C,p)
\;=\;\tang(\GG,C)\;-\;\frac{\iota(C)}{3}\,,
\]
where $\tang(\GG,C)$ is the number of tangencies of $\GG$ with $C$ and
$\iota(C)=C\cdot C+4(g-1)$. In particular the left hand side does not
depend on $\FF$.
\end{theorem}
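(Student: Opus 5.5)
Apply Theorem~\ref{thm:main} directly to $\FF$ with reference $\psi=(T\GG\to TM)|_C$, $L=T\GG|_C$ and $\Sigma=\emptyset$. The hypotheses hold: $C$ is not $\GG$-invariant by (i), and by (iii) the tangency locus of $\FF$ and $\GG$ along $C$ is finite. Since $\GG$ has no singular points on $C$, the morphism $\psi$ is the inclusion of the line subbundle $V:=T\GG|_C$. The set $S$ of \eqref{eq:S} is then $T_V\cup(\TTang(\FF,\GG)\cap C)$.

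These two sets are disjoint. At $p\in T_V$ we have $V_p=T_pC$, while $\FF$ is transversal to $C$ at $p$ by (ii); in particular $\FF$ is regular at $p$, so $p$ is not a tangency of $\FF$ with $\GG$. Theorem~\ref{thm:main} therefore splits as
\[
\sum_{p\in\TTang(\FF,\GG)\cap C}\Ind(\FF,\GG,C,p)\;+\;\sum_{p\in T_V}\Ind(\FF,\GG,C,p)\;=\;\deg V .
\]

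For the second sum we use Proposition~\ref{prop:case1}. Its hypotheses are exactly (i) and (ii): $\psi$ is nonvanishing, the tangency is simple, and $\FF$ is holomorphic and transversal to $C$ at $p$. It gives $\Ind=\tfrac{2k}{1-k}=\tfrac{2}{1-k}-2$ with $k=k(\GG,C,p)$, including the case $k=\infty$, where the value is $-2$. Summing over $T_V$ and applying Theorem~\ref{thm:contact1} to $V$ (Theorem~B), we get
\[
\sum_{p\in T_V}\Ind(\FF,\GG,C,p)=\tfrac43\bigl(C\cdot C+g-1\bigr)-2\,\tang(\GG,C).
\]
By \eqref{eq:tangVC}, $\deg V=C\cdot C-\tang(\GG,C)$, where $\tang(\GG,C)=\#T_V$ because all tangencies are simple. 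Solving,
\[
\sum_{p\in\TTang(\FF,\GG)\cap C}\Ind
= C\cdot C-\tang(\GG,C)-\tfrac43(C\cdot C+g-1)+2\,\tang(\GG,C)
=\tang(\GG,C)-\tfrac13\bigl(C\cdot C+4(g-1)\bigr),
\]
which is the claimed formula with $\iota(C)=C\cdot C+4(g-1)$.

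The only delicate point is checking that Theorem~\ref{thm:contact1} applies when $V$ has no tangencies at all. In that case both sides vanish consistently, because Remark-type reasoning applied to Theorem~\ref{thm:contact1} with $n=0$ forces $C\cdot C+g-1=0$. No further work is needed, since Theorem~\ref{thm:contact1} as stated already covers $n=0$ (an empty sum).

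A second point to check is the convention at inflection points, where $k=\infty$. Proposition~\ref{prop:case1} gives $-2$ there, and this agrees with $\tfrac{2}{1-k}-2$ under the convention $\tfrac{1}{1-\infty}=0$. The whole argument is therefore a bookkeeping combination of Theorem~\ref{thm:main}, Proposition~\ref{prop:case1} and Theorem~\ref{thm:contact1}.
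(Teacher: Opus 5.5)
Your proposal is correct and matches the paper's own proof. Like the paper, you apply Theorem~\ref{thm:main} with $\psi=T\GG|_C$ and $\Sigma=\emptyset$, separate the tangencies of $\GG$ with $C$ from $\TTang(\FF,\GG)\cap C$, evaluate the former by Proposition~\ref{prop:case1}, and then eliminate the weighted sum using Theorem~\ref{thm:contact1} and $\deg V=C\cdot C-\tang(\GG,C)$ from \eqref{eq:tangVC}. Your extra remarks on the case $n=0$ and on the convention at $k=\infty$ are harmless but not needed.
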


\begin{proof}
Apply Theorem~\ref{thm:main} with $\psi=(T\GG\to TM)|_C$; since
$\Sing\GG\cap C=\emptyset$, $\psi$ is the inclusion of the line subbundle
$V_\GG:=T\GG|_C$, and $\pi\circ\psi\not\equiv0$ because $C$ is not
$\GG$-invariant, so $c_1(L)=T\GG\cdot C$. Here $\Sigma=\emptyset$ and the
set $S$ consists of the $n:=\tang(\GG,C)$ tangency points $p_1,\dots,p_n$
of $\GG$ with $C$, together with the finite set $T:=\TTang(\FF,\GG)\cap C$.
The two sets are disjoint, since at a
tangency of $\GG$ with $C$ the foliation $\FF$ is transversal to $C$, hence
transversal to $\GG$. At the tangencies of $\GG$ with $C$,
Proposition~\ref{prop:case1} applies and gives the index
$2k_j/(1-k_j)$ with $k_j=k(\GG,C,p_j)$ (Remark~\ref{rem:kvalues}). Hence
\[
\sum_{p\in T}\Ind(\FF,\GG,C,p)
\;+\;\sum_{j=1}^{n}\Bigl(\frac{2}{1-k_j}-2\Bigr)
\;=\;T\GG\cdot C .
\]
By Theorem~\ref{thm:contact1},
$\sum_j 1/(1-k_j)=\tfrac23(C\cdot C+g-1)$, and by \eqref{eq:tangVC},
$T\GG\cdot C=C\cdot C-n$. Therefore
\[
\sum_{p\in T}\Ind
= C\cdot C-n-\tfrac43\,(C\cdot C+g-1)+2n
= n-\tfrac13\,\bigl(C\cdot C+4(g-1)\bigr). \qedhere
\]
\end{proof}

The individual terms of the left hand side are given by
Proposition~\ref{prop:case4}, and depend on $\FF$ through the geodesic
defect $\nu$ of its leaves; only their sum does not.

When $C$ is not a geodesic, $\iota(C)$ is the number of inflection points
of $C$ (Proposition~\ref{prop:inflections}) and the formula reads:
\emph{whatever the foliation $\FF$, the total index of its tangencies with
$\GG$ along $C$ equals the number of tangencies of $\GG$ with $C$, minus one third of the number of inflection points of $C$.}

\begin{corollary}\label{cor:contact2P2}
In $\PP^2$, with $\deg C=c$ and $\deg\GG=d$,
\[
\sum_{p\,\in\,\TTang(\FF,\GG)\cap C}\Ind(\FF,\GG,C,p)\;=\;c\,(d+1).
\]
\end{corollary}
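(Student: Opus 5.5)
This is a direct specialization of Theorem~\ref{thm:contact2} to $M=\PP^2$ with its tautological flat projective connection. The only work is to evaluate the right-hand side $\tang(\GG,C)-\iota(C)/3$ in terms of $c=\deg C$ and $d=\deg\GG$. We assume, as the statement implicitly does, that $C$ is in general position with respect to $(\FF,\GG)$, so that Theorem~\ref{thm:contact2} applies.

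The first step computes $\tang(\GG,C)$. By \eqref{eq:tangVC} applied to $V=T\GG|_C$ (recall $\GG$ has no singular points on $C$, so $T\GG|_C$ is a line subbundle), we have $\tang(\GG,C)=C\cdot C-T\GG\cdot C$. On $\PP^2$ a foliation of degree $d$ has $T\GG=\mathcal O(1-d)$, so $T\GG\cdot C=c(1-d)$. Since $C\cdot C=c^2$, this gives
\[
\tang(\GG,C)=c^2-c(1-d)=cd+c(c-1),
\]
exactly as in the proof of Corollary~\ref{cor:P2}.

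The second step computes $\iota(C)=C\cdot C+4(g-1)$ with $g=\tfrac{(c-1)(c-2)}2$. This gives $\iota(C)=3c(c-2)$, the count recorded after Proposition~\ref{prop:inflections}. Note that the value is used purely as the number $C\cdot C+4(g-1)$, so the argument is valid even when $C$ is a line, i.e.\ a geodesic.

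Finally, substitute into Theorem~\ref{thm:contact2}:
\[
cd+c^2-c-c(c-2)=cd+c=c(d+1).
\]
There is no real obstacle here. The only points needing care are the convention $T\GG=\mathcal O(1-d)$ for a degree-$d$ foliation of $\PP^2$, and checking that the general-position hypotheses are inherited from Theorem~\ref{thm:contact2}.
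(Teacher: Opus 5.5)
Your proposal is correct and is essentially the paper's proof. Both specialize Theorem~\ref{thm:contact2} to $\PP^2$, use $\tang(\GG,C)=cd+c(c-1)$ (from \eqref{eq:tangVC} with $T\GG\cdot C=c(1-d)$, as in Corollary~\ref{cor:P2}) and $\iota(C)=3c(c-2)$, and subtract to get $c(d+1)$.
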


\begin{proof}
$n=cd+c(c-1)$ and $\iota(C)=3c(c-2)$, so
$n-\iota(C)/3=cd+c(c-1)-c(c-2)=c(d+1)$.
\end{proof}

\end{document}